\documentclass[graybox]{SNmult}

\usepackage{type1cm}
\usepackage{makeidx}
\usepackage{graphicx}
\usepackage{multicol}
\usepackage[bottom]{footmisc}
\usepackage{newtxtext}
\usepackage[varvw]{newtxmath}
\usepackage{amsmath,mathtools}
\usepackage{bm}
\usepackage{booktabs}
\usepackage{array}
\usepackage{xcolor}
\usepackage{float}
\usepackage{etoolbox}

\newcommand{\R}{\mathbb{R}}
\newcommand{\Ewr}{\mathcal E_{wr}}
\newcommand{\Mfast}{\mathcal M_0}

\usepackage{comment}
\usepackage{hyperref}
\hypersetup{
	unicode,
	colorlinks=true,
	linktoc=all,
	linkcolor=blue,
	breaklinks,
	urlcolor=cyan   
}
\makeindex

\AtBeginEnvironment{align}{\small}
\AtBeginEnvironment{align*}{\small}
\AtBeginEnvironment{equation}{\small}
\AtBeginEnvironment{multline}{\small}
\allowdisplaybreaks

\begin{document}
	
	\title*{Water-at-Rest Equilibrium Stability Analysis of a first-moment Shallow Water Exner Moment Model with Sediment Entrainment and Deposition: Extended Technical Report}
	\titlerunning{Water-at-Rest Equilibrium Stability of SWEMED1}
	\author{Afroja Parvin\orcidID{0009-0008-3270-6442} and\\ Giovanni Samaey\orcidID{0000-0001-8433-4523} and\\ Julian Koellermeier\orcidID{0000-0002-8822-461X}}
	\authorrunning{A. Parvin et al.}
	\institute{Afroja Parvin \at Department of Computer Science, KU Leuven, Belgium; School of Mathematical Sciences, Peking University, China, \email{afroja.parvin@kuleuven.be}
		\and Giovanni Samaey \at Department of Computer Science, KU Leuven, Belgium
		\and Julian Koellermeier \at Department of Mathematics, Computer Science and Statistics, Ghent University, Belgium;\\ Bernoulli Institute, University of Groningen, Netherlands}
	
	\maketitle

\abstract{We derive the first-moment Shallow Water Exner Moment model with sediment entrainment and deposition (SWEMED1) and show that the full source term has a fully-settled water-at-rest equilibrium manifold. We prove that the model is only weakly hyperbolic at this equilibrium, which prevents the use of Yong's structural stability framework. However, a linear spectral analysis and numerical results do not indicate instability. Based on numerical results, we introduce a fast-slow scaling of the source term and for the fast limit we derive a new suspended water-at-rest equilibrium manifold, which has a different structure, but is still only weakly hyperbolic. Our results show that the remaining obstruction is linked to the transport closure of the SWEMED1 and we give a constructive direction for the derivation of new closures leading to models with more desirable analytical properties.
\keywords{water-at-rest equilibrium $\cdot$ SWEMED1 $\cdot$ Yong structural stability $\cdot$ fast-slow splitting $\cdot$ sediment transport}}

\section{Introduction}
Sediment transport in shallow-water flows is important for describing the evolution of riverbeds, estuaries, and coastal regions \cite{gonzalez2020robust}. Standard depth-averaged models couple the shallow water equations with the Exner equation and, when needed, a suspended-concentration equation \cite{audusse2004fast,del2023lagrange,exner1925uber,gonzalez2020robust,meng2020localized,zhao2019depth}. These models are computationally efficient, but they use a single depth-averaged velocity and therefore do not resolve the vertical structure of the horizontal flow. This requires quantities related to the near-bed flow, such as bottom shear stress, bedload transport, and entrainment, to be represented by empirical closures based on averaged variables.

The Shallow Water Moment (SWM) model derived in \cite{CiCP-25-669} improves this description by expanding the horizontal velocity in vertical direction using a Legendre basis and deriving evolution equations for the corresponding moment coefficients. Hyperbolic versions of these moment models were developed in \cite{koellermeier2020analysis}, and moment models for bedload morphodynamics were studied in \cite{Garres}, leading to the Shallow Water Exner Moment model (SWEM). In this work, we extend a special case of the SWEM (i.e. $N=1$ moments) to derive and analyze the first-moment Shallow Water Exner Moment model with sediment entrainment and deposition, denoted by SWEMED1. The new SWEMED1 model couples water mass balance, depth-averaged momentum, the first velocity-moment equation, suspended sediment concentration, and bed evolution. The suspended concentration feeds back into the hydrodynamics through a depth-averaged mixture density.

The first aim of this extended report is to give a self-contained derivation of SWEMED1. For non-vanishing bottom friction, moment relaxation, and settling velocity, at equilibrium, we then show that the SWEMED1 source term forces the mean velocity, the first velocity-moment coefficient, and the suspended concentration to vanish, giving rise to the fully-settled water-at-rest equilibrium manifold.
We then examine the stability of solutions around this equilibrium using Yong's structural stability framework \cite{Yong1999SingularPerturbations}, following the use of this framework for hyperbolic shallow water moment equations in \cite{huang2022equilibrium}. Related stability questions for hyperbolic moment systems have also been studied in the context of globally
hyperbolic moment closures and kinetic moment models \cite{Zhao2017StabilityGHM,Cai2013GloballyHyperbolicGrad,Di2017LinearStabilityHMM}. While the source Jacobian has the desired dissipative structure, we show that the transport part is only weakly hyperbolic at the fully-settled equilibrium manifold. This prevents the verification of the full set of Yong's structural stability conditions at this manifold. Since Yong's conditions are sufficient rather than necessary, this obstruction should not be interpreted as instability of the model. In fact, a direct linear spectral calculation shows no growing normal modes, and a numerical relaxation test is consistent with this non-growing behavior, but indicates different time scales in the relaxation towards equilibrium.
For the full source term relaxing to the fully-settled water-at-rest equilibrium manifold, hydrodynamic relaxation and entrainment--deposition balance together lead to a state with no suspended sediment. Numerically, however, the solution can pass through an intermediate stage in which the hydrodynamic variables are already close to rest while the suspended concentration is still positive and decays more slowly. This is sometimes also referred to as a secular equilibrium \cite{Prince1979SecularEquilibrium}.
We therefore introduce a fast-slow source splitting to derive a suspended water-at-rest equilibrium manifold that describes intermediate states in which hydrodynamic variables have relaxed while suspended sediment remains.
We show that the suspended water-at-rest equilibrium has a different source structure, but by itself does not remove the weak hyperbolicity obstruction when the suspended concentration is transported with the vanishing depth-averaged velocity. This shows that the remaining obstruction is linked to the transport closure in the suspended-concentration equation, rather than to the source splitting. A clear direction for future research would therefore be a suspended-concentration transport closure, for instance, through an effective sediment transport velocity. However, a complete analysis of such modified closures is left outside the scope of the present work.

The rest of the paper is organized as follows. Section~\ref{sec:hswemed1_derivation} derives the SWEMED1 model. Section~\ref{sec:water_rest_stability} studies the fully-settled water-at-rest equilibrium manifold and its stability using Yong's structural stability conditions. Section~\ref{sec:numerical_relaxation} presents a numerical relaxation test for the evolution of SWEMED1 near the fully-settled water-at-rest manifold. Section~\ref{sec:fast_manifold} introduces the fast-slow scaling, derives the limiting fast manifold, and shows that the obstruction persists. Section~\ref{sec:conclusion} concludes the paper and gives directions for future work.

    \section{A first-moment Shallow water moment model for sediment transport}\label{sec:hswemed1_derivation}
    In this section, we provide a self-contained derivation of the first-moment Shallow Water Exner Moment model $(N=1)$ with sediment Entrainment and Deposition, denoted by SWEMED1. The model extends the shallow water moment framework for bedload morphodynamics \cite{Garres} by adding a depth-averaged suspended-sediment concentration equation together with entrainment-deposition exchange between the bed and the water column. The hydrodynamic variables are the water height $h$, the depth-averaged horizontal velocity $u_m$, and the first velocity-moment coefficient $\alpha_1$, while the morphodynamic variables are the suspended concentration $c_m$ and the bed elevation $h_b$. These components are coupled through the mixture-density dependence, the bedload flux in the Exner equation, and the entrainment--deposition source terms. 
    
    \subsection{Hydrodynamic equations}\label{Hydrodynamic}
    We consider the two-dimensional inhomogeneous Navier--Stokes equations
    to derive the coupled mathematical model for sediment
    transport. However, the derivation can be extended to three spatial
    dimensions \cite{CiCP-25-669}. The inhomogeneous Navier--Stokes equations play a central
    role in the description of geophysical flows, including rivers and
    shallow coastal currents. These flows are incompressible, but the
    density varies due to the presence of suspended sediment. Two-dimensional
    inhomogeneous Navier--Stokes equations are written as
    \begin{align}
    	\partial_x u + \partial_z w & = 0, \label{eq:2D NSE system_1}\\
    	\partial_t (\rho u)+ \partial_x (\rho u^2) + \partial_z (\rho uw)
    	&= -\partial_x p + \partial_x \sigma_{xx} + \partial_z \sigma_{xz},
    	\label{eq:2D NSE system_2} \\
    	\partial_t (\rho w)+ \partial_x (\rho uw) + \partial_z (\rho w^2)
    	&= -\partial_z p + \partial_x \sigma_{zx} + \partial_z \sigma_{zz}
    	- \rho g,
    	\label{eq:2D NSE system_3} \\
    	\partial_t \rho + \partial_x (\rho u) + \partial_z (\rho w)
    	&= 0,
    	\label{eq:2D NSE system_4}
    \end{align}
    where $u$ and $w$ denote the horizontal and vertical velocity
    components, respectively, $g$ is the gravitational acceleration, and
    $\sigma_{xx},\sigma_{xz},\sigma_{zx},\sigma_{zz}$ are the components of
    the deviatoric stress tensor.
    
    Due to the water-sediment mixture, the density $\rho$ is not constant and is
    written as
    \begin{equation}\label{density equation}
    	\rho(t,x,z)= \rho_w + c(t,x,z)\,(\rho_s-\rho_w),
    \end{equation}
    where $c(t,x,z)\in[0,1]$ is the local volumetric sediment concentration,
    and $\rho_w$ and $\rho_s$ are the densities of water and sediment,
    respectively. The mixture as a whole is assumed incompressible and
    satisfies $\nabla\cdot \boldsymbol{u} = 0$. The variations of the density $\rho$ are solely due to the variations of the concentration $c$.
    
    In principle, both $\rho$ and $c$ depend on the vertical coordinate
    $z$, and this dependence should be taken into account when deriving a
    reduced model. In the present work, we restrict ourselves to both depth-averaged density $\rho$ and depth-averaged concentration $c_m$. This is motivated by our attention to dilute and moderately dilute suspensions $c \ll 1$, where the vertical variations of the density $\rho$ are
    small compared with the overall vertical variation of the velocity $u$. Accordingly, in the shallow water framework we approximate the mixture density $\rho$ by its vertical average and treat it as a function of the horizontal coordinate only, i.e. $\rho(t,x)$. More precisely, instead of \eqref{density equation}, we use 
    \begin{equation}\label{eq:averaged_density}
    	\rho(t,x) = \rho_w + c_m(t,x)\,(\rho_s-\rho_w),
    \end{equation}
    and with a slight abuse of notation, use the same symbol $\rho$ for
    this vertically averaged density in the reduced model. The reduced hydrodynamics therefore use the depth-averaged density $\rho(c_m)$. Note that extensions towards vertically changing densities and concentrations are left for future work and are closely linked to existing non-hydrostatic
    models \cite{Scholz2024DispersionSME}. When near-bed sediment exchange is evaluated, see Section \ref{bedflux}, the near-bed concentration is closed diagnostically through $c_b=S_b c_m$ \eqref{bed_concentration}; this closure is not used to introduce a vertically varying density field in the hydrodynamic pressure law.
    \par Under the assumption of shallowness (vertical accelerations are small compared with gravity) we adopt the hydrostatic approximation for the pressure $p$ and, consistent with the weakly stratified regime described
    above, we approximate the mixture density $\rho$ by its vertical average.
    In particular, density variations enter the reduced hydrodynamics through $\rho=\rho(c_m)$ in the hydrostatic pressure and the associated baroclinic forcing, while
    higher-order effects of vertical density stratification on the inertial
    terms are neglected. With these modelling assumptions, the
    two-dimensional inhomogeneous Navier--Stokes system
    \eqref{eq:2D NSE system_1}--\eqref{eq:2D NSE system_4} reduces to
    \begin{align}
    	\partial_x u + \partial_z w & = 0, \label{reference system21}\\
    	\partial_t u + \partial_x (u^2) + \partial_z (u w)
    	&= - \dfrac{1}{\rho}\,\partial_x p
    	+ \dfrac{1}{\rho}\,\partial_z \sigma_{xz}, \label{reference system22}\\
    	\partial_t c + \partial_x (c u) + \partial_z (c w)
    	&= 0.  \label{reference system23}
    \end{align}
    Consistent with the hydrostatic approximation, the pressure is
    expressed as
    \begin{equation}\label{pressure equation}
    	p(t,x,z)
    	= \rho(t,x)\,g\big(h(t,x)+ h_b(t,x) - z\big),
    \end{equation}
    where $h(t,x):=h_s(t,x)-h_b(t,x)$ is the water depth, $h_b$ denotes the
    bed elevation and $h_s$ the free-surface elevation. The flow domain is
    bounded below by the bottom topography $h_b(t,x)$ and above by the free
    surface $h_s(t,x)$. In this formulation the dominant feedback of the suspended sediment on the hydrodynamics occurs through the mixture density $\rho=\rho(c_m)$ in the hydrostatic pressure \eqref{pressure equation} and the buoyancy terms in the later depth-averaged momentum equation, while a possible dependence of the viscous and turbulent stresses on the concentration $c$ is neglected. Crucially, the suspended concentration is not merely a passive scalar in the reduced model: it affects the momentum equation through the mixture-density dependence of the hydrostatic pressure.
    \par The boundary conditions for the system include kinematic boundaries applied both at the free surface and the bottom, which are defined as 
    \begin{align}
    	\partial_t h_s + \left.u \right\vert_{z=h_s}\partial_x h_s-\left. w \right\vert_{z=h_s}	& = 0, \label{bc_z_cord_1}\\
    	\partial_t h_b + \left.u \right\vert_{z=h_b}\partial_x h_b-\left. w \right\vert_{z=h_b}& = -\, F_b, \label{bc_z_cord_2}
    \end{align}
    where the surface flux is zero, and the bed exchange rate $F_b$, which couples the model to the morphology, will be explained in Section~\ref{bedflux}. Following classical entrainment-deposition modelling, we represent the exchange by a net interfacial flux, as commonly used in depth-averaged formulations. Existing shallow water moment models simplify the problem by neglecting the bed exchange rate, i.e., by taking $F_b=0$ \cite{Garres,CiCP-25-669}.

    \par Additionally, we assume that the remaining shear stress component of the deviatoric stress tensor, \( \sigma_{xz} \), vanishes at the free surface and follows Manning friction \( \tau \), at the bottom \cite{Garres}. As a result, the boundary conditions for the shear stress are
    \begin{equation}\label{shear_stress_z_cord}
    	\left.\sigma_{xz}\right\vert_{z=h_s}=0 \quad \quad \text{and} \quad \quad \left.\sigma_{xz}\right\vert_{z=h_b}=\, \tau.
    \end{equation}
    
    \subsubsection{Mapped reference system and moment expansion}
    While the system in equations \eqref{reference system21}–\eqref{reference system23} already defines our reference system, this system still depends on the vertical dimension and we aim to reduce its dimension and derive a reduced model by adopting the so-called moment approach outlined in \cite{CiCP-25-669}. This moment approach is based on two main ideas: first, mapping to a reference (\(\zeta-\)coordinate) system and second, expanding the horizontal velocity using a moment expansion. To derive our coupled model, we adopt the approach of \cite{CiCP-25-669} but adapt it to our specific setting, which incorporates coupling with the morphology discussed later in Section~\ref{morphology}. 
    
    According to the first main idea of the Shallow Water Moment model (SWM) approximation described in \cite{CiCP-25-669} we introduce a scaled vertical variable $\zeta(t,x)$, which transforms the $z$-coordinate from the physical space $z\in \left[h_b,h_s\right]$ to the mapped space $\zeta\in \left[0,1\right]$, as shown in Figure~\ref{fig:mapping_graph}. The definition of the mapping to $\zeta$-coordinates is given by 
    \begin{equation}\label{mapping}
    	\zeta(t,x) = \dfrac{z-h_b(t,x)}{h_s(t,x)-h_b(t,x)}= \dfrac{z-h_b(t,x)}{h(t,x)}.
    \end{equation}

    \begin{figure}[H] 
    	\centering
    	\includegraphics[width=0.95\textwidth]{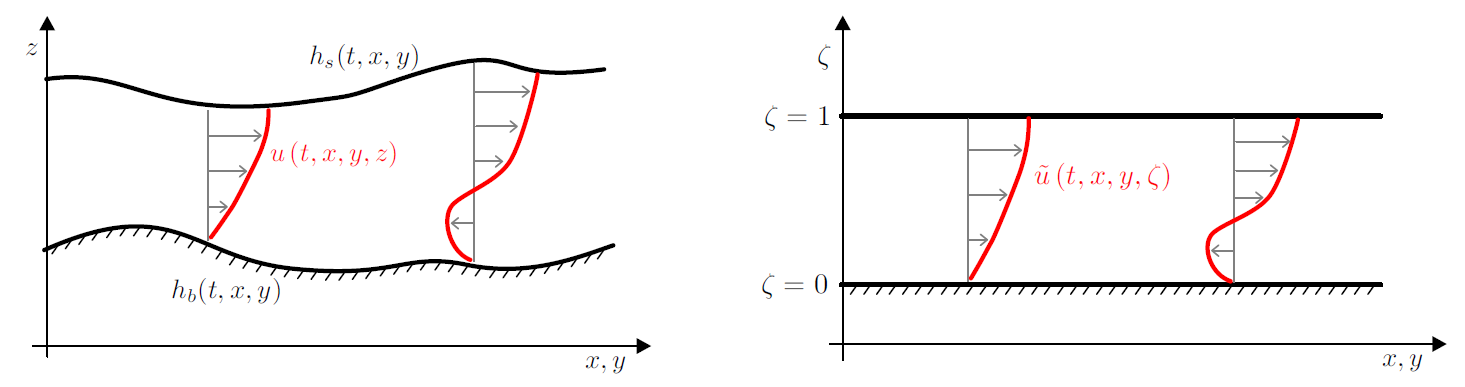}
    	\caption{The mapping from physical $z$-space to transformed $\zeta$-space \cite{CiCP-25-669}.}
    	\label{fig:mapping_graph}
    \end{figure}
    Using \eqref{mapping}, for any function $\Phi(t,x,z)$, the corresponding mapped function in $\zeta$-coordinates is given by
    \begin{equation}
    	\Tilde{\Phi}(t,x,\zeta) = \Phi(t,x,\zeta h(t,x)+h_b(t,x)).
    \end{equation}
    The corresponding differential operators read
    \begin{equation}\label{diffop1}
    	\partial_\zeta \Tilde{\Phi} = h\partial_z\Phi \quad \text{and} \quad h\partial_s\Phi = \partial_s(h\Tilde{\Phi})-\partial_\zeta(\partial_s(\zeta h+h_b)\Tilde{\Phi}), \quad \text{for}\,\,\, s \in \left[t,x\right]. 
    \end{equation}
    Taking into account the mapping \eqref{mapping} and using the differential operators \eqref{diffop1}, the complete vertically resolved system can be written as \cite{CiCP-25-669}
    \begin{align}
    	\partial_x \left(h \Tilde{u}\right)+\partial_\zeta \left[\Tilde{w}-\Tilde{u}\partial_x(\zeta h+h_b) \right]   &= 0, \label{eq:resolved  system1_1}\\ 
    	\begin{split}
    		\partial_t (h \Tilde{u})+ \partial_x(h \Tilde{u}^2) +  \partial_\zeta \left[\Tilde{u}\left(\Tilde{w} -\Tilde{u}\partial_x\left(\zeta h+h_b\right)- \partial_t\left(\zeta h+h_b\right)\right)\right] \\
    		+gh\partial_x (h + h_b) + \dfrac{gh^2}{\rho}(1-\zeta)\partial_x \rho & = \dfrac{1}{\rho}\partial_\zeta \Tilde{\sigma}_{xz},
    	\end{split} \label{eq:resolved  system1_2} \\
    	\partial_t (h \Tilde{c})  + \partial_x \left(h \Tilde{c}\Tilde{u}\right) +\partial_\zeta \left[\Tilde{c} \left(\Tilde{w}- \partial_x(\zeta h+h_b)\Tilde{u}-\partial_t(\zeta h+h_b) \right) \right]&=0. \label{eq:resolved  system1_3}
    \end{align}
    The system of equations \eqref{eq:resolved  system1_1} to \eqref{eq:resolved  system1_3} is referred to as the vertically resolved system because it incorporates the dependence on the vertical variable $\zeta.$\\
    Furthermore, the boundary conditions \eqref{bc_z_cord_1} and \eqref{bc_z_cord_2} and also the shear stress at the free surface and bottom are mapped to $\zeta$-coordinate as
    \begin{align}
    	& \partial_t h_s + \left.\Tilde{u} \right\vert_{\zeta=1} \partial_x h_s-\left. \Tilde{w} \right\vert_{\zeta=1} = 0, \quad \quad \quad \quad \quad \quad  \quad \left.\Tilde{\sigma}_{xz}\right\vert_{\zeta=1}=0, \label{bc_zeta_1}\\
    	&  \partial_t h_b + \left.\Tilde{u}\right\vert_{\zeta=0}  \partial_x h_b-\left. \Tilde{w} \right\vert_{\zeta=0}= - F_b, \quad \quad \quad \quad \quad \quad \left.\Tilde{\sigma}_{xz}\right\vert_{\zeta=0}=\, \tau. \label{bc_zeta_2}
    \end{align}
    \par Following the second main idea of the Shallow Water Moment model in \cite{CiCP-25-669} it is possible to expand the horizontal velocity in the vertical variable $\zeta$, using a general Legendre polynomial expansion up to general polynomial degree $N$, in the same way as in \cite{CiCP-25-669}. In this work, we retain only one velocity-moment, i.e. $N=1$. The extension to higher-order moment systems follows the same projection strategy as in \cite{CiCP-25-669}, but is not needed for the equilibrium analysis below and would only complicate its presentation. For $N=1$ the velocity profile is written as
    \begin{equation}\label{moment expansion}
    	\Tilde{u}(t,x,\zeta) = u_m(t,x) + \alpha_1(t,x) \phi_1(\zeta),
    \end{equation}
    where $u_m(t,x)= \displaystyle \int_{0}^{1}\Tilde{u}(t,x,\zeta) d\,\zeta $ is the mean of the horizontal velocity, and the basis function $\phi_1(\zeta):[0,1]\rightarrow \mathbb{R} $ is the scaled linear Legendre polynomials of degree $N=1$ given by
    \begin{equation*}
    	\phi_1(\zeta)= 1-2\zeta.
    \end{equation*}
    We note that $\phi_1$ satisfies the condition $\phi_1(0)=1$, and $\displaystyle \int_{0}^{1} \phi_1(\zeta)d\zeta = 0$, meaning that it is orthogonal to the constant function.
    The corresponding coefficient $\alpha_1(t,x)$ is unknown and an additional equation is derived by projecting the momentum balance equation onto the first Legendre polynomial, see \cite{CiCP-25-669}. The coefficient $\alpha_1(t,x)$ is the first velocity-moment coefficient, also called first moment. Since $\phi_1(\zeta)=1-2\zeta$ is linear in the vertical coordinate, the term $\alpha_1\phi_1(\zeta)$ represents the linear component of the reconstructed vertical velocity profile. Thus $\alpha_1$ measures the deviation from a vertically uniform, depth-averaged velocity profile.
    
    \par By employing the first-moment approach with $N=1$ outlined above within our specific framework, we need to derive the evolution equations for the extended set of conservative variables \( (h, hu_m, h\alpha_1, h c_m)^T \). The evolution equations for the water height \( h \), the mean horizontal velocity \( u_m \), and the volumetric sediment concentration \( c_m \) are obtained through the following Galerkin projection of \eqref{eq:resolved system1_1}, \eqref{eq:resolved system1_2}, and \eqref{eq:resolved system1_3}, respectively
    \begin{equation}\label{Galerkin projection}
    	\langle \cdot, \, 1 \rangle = \int_{0}^{1} \cdot \, \, d\zeta.
    \end{equation} \\
    Furthermore, the additional evolution equation for the first velocity-moment coefficient $\alpha_1$ is derived by employing a first-order Galerkin projection of the momentum balance equation \eqref{eq:resolved system1_2}
    \begin{equation}\label{Higher-order Galerkin projection}
    	\langle \cdot, \, \phi_1 \rangle = \int_{0}^{1} \cdot \, \phi_1(\zeta) \, d\zeta.
    \end{equation} 
    i.e. multiplying equation \eqref{eq:resolved system1_2} by the corresponding linear test function \( \phi_1(\zeta) \) and subsequently integrating with respect to \( \zeta. \)  
    \par In the three subsequent sections, we provide a detailed derivation of the depth-averaged equations for \( h \), \( hu_m \), and \( h\alpha_1 \). The corresponding derivation for \( h c_m \) is presented within the morphodynamic Section~\ref{concentration_avg} since morphodynamic change employs suspended sediment concentration.  
    
    \subsubsection{Depth-averaging the mass balance}
    To recover an explicit expression for $ \Tilde{w}$, equation \eqref{eq:resolved  system1_1} can be written in the following integral form \cite{CiCP-25-669} 
    \begin{equation}\label{vertical velocity}
    	\Tilde{w}= - \partial_x \left(h \int_{0}^{\zeta} \Tilde{u} \,d\hat{\zeta}\right)+ \Tilde{u}\partial_x(\zeta h+h_b),
    \end{equation}
    Now, to derive the standard depth-averaged mass balance equation of the shallow water system, we depth-average the transformed mass balance equation, i.e. we apply the projection \eqref{Galerkin projection} to the equation \eqref{vertical velocity} and use the kinematic boundary conditions \eqref{bc_zeta_1} - \eqref{bc_zeta_2}, which yields the following depth-averaged mass balance equation similar to \cite{CiCP-25-669}
    \begin{equation}\label{depth-average mass balance}
    	\partial_t h + \partial_x (hu_m) = F_b.
    \end{equation}
    Equation \eqref{depth-average mass balance} represents an equation for the water height $h$, and couples to the equation for the discharge $hu_m$, (Section~\ref{momentum_discharge}) and the bed exchange rate $F_b$ (Section~\ref{bedflux}). Without the bed exchange $F_b$, equation \eqref{depth-average mass balance} simplifies to the same as in the SWM \cite{CiCP-25-669}. 
    
    %%%%%%%%%%%%%%%%%%%% momentum balance%%%%%%%%%%%%%%%%%%%%
    \subsubsection{Depth-averaging the momentum equation}\label{momentum_discharge}
    The depth-averaged momentum balance equation along the horizontal $x-$axis is derived by applying \eqref{Galerkin projection} to \eqref{eq:resolved system1_2} and is given by 	
    \begin{align}\label{depth_avg_momentum}   
    	\begin{split}
    		\partial_t(h u_m) +\partial_x \left( h \left(u_m^2+ \dfrac{\alpha_1^2}{3}\right)+ \dfrac{gh^2}{2} \right)
    		&=  - gh \partial_x h_b + F_b u_b \\
    		&- \dfrac{gh^2}{2 \rho}(\rho_s-\rho_w)\partial_x c_m- \epsilon |u_b|u_b.
    	\end{split}
    \end{align}
    We provide the detailed derivation in Appendix \ref{averaged momentum balance}.
    \par Here we only discuss the last three terms on the right-hand side of \eqref{depth_avg_momentum}, as they are new compared to the momentum balance equation in \cite{CiCP-25-669}:
    \begin{enumerate}
    	\item The term $F_b u_b$ quantifies momentum transfer between flow and erodible bed due to sediment exchange.
    	\item The term $\displaystyle \dfrac{gh^2}{2 \rho}(\rho_s-\rho_w)\partial_x c_m$ represents the effects of spatial variations in sediment concentration.
    	\item The term $\epsilon |u_b|u_b$ represents bottom friction in the momentum balance. 
    	We consider Manning friction at the bottom \cite{Garres} and Newtonian friction within the fluid  \cite{Garres,CiCP-25-669}. Since the Galerkin projection of the momentum balance is influenced only by bottom friction and remains independent of friction within the fluid, we present only the bottom friction in this section, while the treatment of friction within the fluid will be addressed in the subsequent section.\\
    	Accordingly, the Manning friction at the bottom is given by
    	\begin{equation*}
    		\left. \Tilde{\sigma}_{xz}(\zeta) \right\vert_{\zeta=0}= -\rho \epsilon |u_b|u_b,
    	\end{equation*}
    	where the bottom velocity is expressed as
    	\begin{equation}\label{bottom_velocity}
    		u_b=\left. \Tilde{u}\right\vert_{\zeta=0}=u_m+\displaystyle \alpha_1(t,x)\, \phi_1(0)=u_m+\alpha_1(t,x),
    	\end{equation} 
    	and $\epsilon$ is a dimensionless constant defined in \cite{Garres}.
    \end{enumerate}

    %%%%%%%%%% Higher order averages %%%%%%%%%%%%%%%%%%%%%%
    \subsubsection{First-order moment of the momentum equation}\label{Higher order averages_1}
    The evolution equation for the first velocity-moment coefficient $\alpha_1$ is derived from the first-order Galerkin projection \eqref{Higher-order Galerkin projection} of \eqref{eq:resolved system1_2}. 
    Thus, the resulting equation for $h\alpha_1$ is given by
    \begin{align}\label{final_higher_average_equation}
    \begin{split}
        \partial_t (h\alpha_1)&+ \partial_x \left(2hu_m\alpha_1\right)
        = u_m\partial_x (h \alpha_1) + 2F_b\alpha_1 \\
        &\quad\quad -\dfrac{gh^2}{2\rho}(\rho_s-\rho_w)\partial_x c_m
        -3\epsilon |u_b|u_b - \dfrac{12\nu}{h}\alpha_1.
    \end{split}
    \end{align}
    where the already inserted constants generated by the scaled linear Legendre polynomial $\phi_1(\zeta)=1-2\zeta$ in the notation similar to  \cite{CiCP-25-669} are given by 
    \begin{equation}\label{constant matrices}
    \begin{aligned}
        A_{111}&=3\int_0^1\phi_1^3\,d\zeta=0, 
        &B_{111}&=3\int_0^1\phi_1'(\zeta)\left(\int_0^\zeta\phi_1(\hat\zeta)\,d\hat\zeta\right)\phi_1(\zeta)\,d\zeta=0,\\
        C_{11}&=\int_0^1(\phi_1')^2\,d\zeta=4,
        &G_{11}&=3\int_0^1\phi_1\phi_1'\,d\zeta=0,\\
        H_{11}&=3\int_0^1\zeta\phi_1\phi_1'\,d\zeta=1,
        &K_1&=\int_0^1\zeta\phi_1\,d\zeta=-\dfrac16.
    \end{aligned}
    \end{equation}
    We provide the detailed derivation of \eqref{final_higher_average_equation} in Appendix \ref{Higher order averages}.
    \par On the right-hand side of equation \eqref{final_higher_average_equation}, four additional terms appear compared to the classical SWM \cite{CiCP-25-669}: 
    \begin{enumerate}
    	\item The term $2F_b\alpha_1$ represents a source term associated with entrainment and deposition effects at the bottom, governed by the bed exchange rate $F_b.$
    	\item The term $-\dfrac{gh^2}{2\rho}(\rho_s-\rho_w)\partial_x c_m$ arises from the first-order Galerkin projection on the spatial variation of sediment concentration. This term signifies the interaction between suspended sediment and momentum balance.
    	\item The term $3\epsilon |u_b|u_b+12\nu\alpha_1/h$ results from the first-order Galerkin projection of Manning friction at the bottom, i.e., $\left. \Tilde{\sigma}_{xz}(\zeta) \right\vert_{\zeta=0}= -\rho \epsilon |u_b|u_b,$ and Newtonian friction within the fluid i.e.,  $\left. \Tilde{\sigma}_{xz}(\zeta) \right\vert_{\zeta \in (0,1)}= - \dfrac{\mu}{h} \partial_\zeta \Tilde{u}(\zeta)$, and the projection reads  
	    \begin{align*}
	    \dfrac{1}{\rho}\int_0^1 \phi_1\partial_\zeta\Tilde\sigma_{xz}\,d\zeta
	    &=\dfrac{1}{\rho}\int_0^1 \partial_\zeta(\phi_1\Tilde\sigma_{xz})\,d\zeta
	    -\dfrac{1}{\rho}\int_0^1 \Tilde\sigma_{xz}\partial_\zeta\phi_1\,d\zeta\\
	    &=- \dfrac{1}{\rho}\left.\phi_1\Tilde\sigma_{xz}\right\vert_{\zeta=0}
	    -\dfrac{\mu}{\rho h}\int_0^1 \phi_1^\prime\partial_\zeta\Tilde u\,d\zeta\\
	    &=-\epsilon |u_b|u_b
	    -\dfrac{\nu}{h}\int_0^1 \alpha_1(\phi_1^\prime)^2\,d\zeta\\
	    &=-\epsilon |u_b|u_b-\dfrac{4\nu}{h}\alpha_1 .
	    \end{align*}
    \end{enumerate}
 
    \subsection{Morphodynamic equations}\label{morphology}
    %%%%%%%%%% Average of volumetric sediment concentration %%%%%%%%%%%%%%%%%%%%%%
    The morphodynamic part of the model covers the suspended and bedload transport. It consists of the depth-averaged volumetric sediment concentration equation for $c_m$ and the Exner equation for $h_b$ as explained below.
    
    \subsubsection{Depth-averaged volumetric sediment concentration}
    \label{concentration_avg}
    In this section, we derive the depth-averaged equation for the suspended sediment concentration. The transported variable is the depth-averaged concentration $c_m(t,x)$. The near-bed concentration entering the deposition closure is prescribed algebraically through the Bradford factor, $c_b=S_b c_m$, without introducing an additional concentration profile, which is an extension left for future work.
    \ \
    In the mapped $\zeta$–coordinates, the vertically resolved concentration
    equation \eqref{eq:resolved system1_3} can be written in conservative form as
    \begin{equation}
    	\partial_t (h \tilde c)
    	+ \partial_x (h \tilde c \tilde u)
    	+ \partial_\zeta J = 0,
    	\label{eq:c_resolved_zeta}
    \end{equation}
    where the vertical sediment flux $J$ is given by
    \begin{equation}
    	J(t,x,\zeta)
    	= \tilde c(t,x,\zeta)\,
    	\Big[
    	\tilde w
    	- \partial_x(\zeta h + h_b)\,\tilde u
    	- \partial_t(\zeta h + h_b)
    	\Big].
    	\label{eq:J_def}
    \end{equation}
    Integrating \eqref{eq:c_resolved_zeta} over $\zeta \in [0,1]$ and using
    $\int_0^1 \partial_\zeta J \, d\zeta = J(1) - J(0)$ yields
    \begin{equation}
    	\partial_t \Big( h \int_0^1 \tilde c \, d\zeta \Big)
    	+ \partial_x \Big( h \int_0^1 \tilde c \tilde u \, d\zeta \Big)
    	+ \big[ J(1) - J(0) \big] = 0.
    	\label{eq:c_int}
    \end{equation}
    We define the depth-averaged volumetric sediment concentration and the depth-averaged concentration flux as
    \begin{equation}
    	c_m(t,x) := \int_0^1 \tilde c(t,x,\zeta)\,d\zeta,
    	\qquad
    	\langle \tilde c \tilde u \rangle
    	:= \int_0^1 \tilde c \tilde u\,d\zeta.
    \end{equation}
    With this notation, \eqref{eq:c_int} becomes
    \begin{equation}
    	\partial_t (h c_m)
    	+ \partial_x \big( h \langle \tilde c \tilde u \rangle \big)
    	= J(0) - J(1).
    	\label{eq:hcm_J}
    \end{equation}
    At the free surface the kinematic boundary condition
    $\partial_t h_s + \tilde u|_{\zeta=1}\,\partial_x h_s -
    \tilde w|_{\zeta=1} = 0$ implies that no sediment crosses the interface, so that $J(1) = 0.$\\
    At the bed, the kinematic condition for the moving interface reads
    \begin{equation}
    	\partial_t h_b + \tilde u|_{\zeta=0}\,\partial_x h_b
    	- \tilde w|_{\zeta=0} = -F_b,
    	\label{eq:bed_kinematic}
    \end{equation}
    with $F_b$ the bed exchange rate introduced in
    Section~\ref{bedflux}. If the bed were impermeable to sediment, the
    purely kinematic contribution associated with the moving interface would
    give $J(0) = \tilde c(t,x,0)\,F_b$. In the present setting, however, the
    bed acts as an active sediment reservoir: sediment particles can
    be entrained from the bed into the flow (entrainment) and deposited from
    suspension onto the bed (deposition). We therefore interpret the entrainment
    and deposition laws as prescribing the interfacial sediment flux
    on the fluid side and set
    \begin{equation}
    	J(0) = E - D,
    	\label{eq:J0_ED}
    \end{equation}
    where $E$ and $D$ are the entrainment and deposition rates defined in
    Section~\ref{bedflux}. Substituting \eqref{eq:J0_ED} and $J(1)=0$ into
    \eqref{eq:hcm_J} yields the depth-averaged concentration equation once the standard shallow-water suspended-load closure $\langle\tilde c\tilde u\rangle\simeq c_m u_m$ is used.
    
    \begin{equation}
    	\partial_t(h c_m) + \partial_x (h c_m u_m) = E - D.
    	\label{eq:hcm_final_LHS}
    \end{equation}

    The closure $\langle \tilde c\tilde u\rangle\simeq c_m u_m$ is the depth-averaged suspended-load approximation used in SWEMED1. It keeps the concentration equation compatible with a single transported scalar $c_m$ and uses the depth-averaged velocity $u_m$ as transport velocity. This choice is important in Section~\ref{sec:fast_manifold}: at water-at-rest, where $u_m=0$, the same closure determines the concentration row of the transport matrix and is directly related to the persistence of the zero-speed degeneracy at the fast manifold.

    %%%%%%%%%%%%%%%%%%%%%%%%%%%%%%%%%%%%%%%%%%%
    \subsubsection{Bedload mass balance equation}
    We use the Exner equation  \cite{exner1925uber} to model the evolution of bedload transport, incorporating the principle of mass conservation for the sediment layer. This formulation represents sediment transport through a flux term, and the right-hand side of the equation includes a source term that describes entrainment and deposition processes, which reads as 
    \begin{equation}\label{Exner equation}
    	\partial_t h_b  + \dfrac{1}{1-\psi}\partial_x Q_b = -F_b,
    \end{equation}
    where $h_b$ is the bed elevation, $Q_b$ is the solid sediment discharge, and $F_b$ is the bed exchange rate, which accounts for entrainment and deposition processes, and defined in Section~\ref{bedflux}. More explicitly, the Exner equation represents the rate of bed deformation. Active sediment transport and rapid bed deformation will occur if the flow entrains more sediment particles than it deposits.
    \par The result of the Exner equation \eqref{Exner equation} is highly dependent on the choice of the sediment discharge formula, $Q_b$. We note that there is a plethora of formulas for the sediment discharge term. In this paper, we adopt the following formula \cite{Garres}
    \begin{equation}\label{sediment discharge}
    	Q_b = sgn(\tau)\, Q \, \Phi(\theta), 
    \end{equation}
    where $Q=\sqrt{\left(\dfrac{\rho_s}{\rho_w}-1\right)g\,d_s^3}$ is the characteristic discharge and $sgn(\cdot)$ is the sign function.	
    We chose the expression \eqref{sediment discharge} for $Q_b$ since it depends on the bottom shear stress, used in its non-dimensional form $\theta$ and $Q_b$ can be expressed as a function of $\theta$ as $\Phi(\theta).$  The dimensionless bottom shear stress $\theta$ is also called Shields parameter and defined by
    \begin{equation}\label{Shields parameter}
    	\theta = \dfrac{|\tau|}{g\,(\rho_s- \rho_w)\,d_s}= \dfrac{\rho \epsilon |u_b| u_b}{g\,(\rho_s- \rho_w)\,d_s},
    \end{equation}
    where $\epsilon$ is a dimensionless constant \cite{Garres}, $u_b$ is the velocity at bottom, as defined in  \eqref{bottom_velocity}, and $d_s$ is the diameter of a sediment particle.
    \par There is a framework that includes many different formulas for $\Phi(\theta)$ (see, for example, \cite{gonzalez2020robust}). In this work, we employ the Meyer--Peter--Muller formula \cite{Garres}
    \begin{equation}\label{Meyer--Peter--Muller}
    	\Phi(\theta)= 8(\theta-\theta_{c})_{+}^{3/2},
    \end{equation} 
    where $(\cdot)_+$ is the positive part and $\theta_c$ is the critical shear stress. Notably, sediment transport is initiated only when the Shields parameter $\theta$ exceeds the critical threshold $\theta_c$.
    
   \begin{remark}
    We use the Meyer--Peter--Muller formula \eqref{Meyer--Peter--Muller} for the bedload flux \(Q_b\) \eqref{sediment discharge}, but evaluate the bed shear stress \(\rho \epsilon|u_b|u_b\) through the bottom velocity \(u_b\) \eqref{bottom_velocity} obtained from the moment reconstruction, rather than through the depth-averaged velocity \(u_m\).
    This follows the modelling strategy used for shallow water moment models with bedload transport in \cite{Garres}. The underlying use of velocity moments and friction closures is based on the shallow water moment framework of \cite{CiCP-25-669}. In classical shallow water--Exner models, the bedload flux \(Q_b\) \eqref{sediment discharge} is commonly written as a function of the Shields parameter or bed shear stress, which is then closed in terms of depth-averaged flow quantities \(u_m\) through a friction law \cite{Cordier2011BedloadSplitting,FernandezNieto2017FormalSVE,Gonzalez2020Robust}. Using the bottom velocity \(u_b\) instead emphasizes the role of the reconstructed vertical velocity
    profile in the moment model. A detailed recalibration of the empirical bedload and
    friction coefficients for this choice is left outside the scope of the present work.
    \end{remark}    
    \subsubsection{Morphological conditions}\label{bedflux}
    The bed exchange rate $F_b$, used in Section~\ref{Hydrodynamic} and Section~\ref{morphology}, represents a dynamic exchange between two effects:
    (1) the sediment deposition due to gravitational force and (2) the sediment entrainment from the interface due to erosion from the bottom sediment layer. This balance is crucial for understanding sedimentary processes and is defined by
    \begin{equation}\label{bedflux_eq}
    	F_b = \dfrac{E-D}{1-\psi},
    \end{equation}  
    Here $E$ and $D$ denote the entrainment and deposition rates, respectively, and $\psi$ is the bedload porosity.    
    \vspace{2mm}\\
    To close the system, we refer to \cite{bradford1999hydrodynamics,del2023lagrange,garcia1993experiments,gonzalez2020robust,zhang1993sedimentation} and take the following formulas for entrainment and deposition.\\
    The sediment entrainment and deposition follow from \cite{del2023lagrange,gonzalez2020robust}, as given by
    \begin{equation}\label{eq:ED_standard}
    	E = \omega_0 (1-\psi) E_s,\qquad D= \omega_0 c_b,
    \end{equation} 
    where
    \begin{itemize}
    	\item[-] $\omega_0$ is the settling velocity determined by \cite{zhang1993sedimentation}, 
    	\begin{equation}\label{settling velocity}
    		\omega_0 = \sqrt{\left(\dfrac{13.95\nu_w}{d_s}\right)^2+1.09\, 	\rho_w \left(\dfrac{\rho_s}{\rho_w}-1\right) gd_s} - \dfrac{13.95\nu_w}{d_s},
    	\end{equation}
    	\item[-] $\nu_w$ is the kinematic viscosity of water and $d_s$ is the diameter of sediment particles,
    	\item[-] $E_s$ is the sediment entrainment coefficient and computed by \cite{garcia1993experiments},
    	\begin{equation}
    		E_s = \dfrac{1.3\times 10^{-7} \mathcal{Z}^5}{1+4.3\times 10^{-7} 	\mathcal{Z}^5},
    	\end{equation} 
    	\item[-]  $\mathcal{Z} = \displaystyle  \dfrac{\gamma_1\sqrt{c_D} |u_b| }{\omega_0}\mathcal{R}^{\gamma_2}_{p}$ with particle Reynolds number	$\mathcal{R}_p = \displaystyle \dfrac{\sqrt{(\rho_s-\rho_w) g d_s}d_s}{\nu_w}$, $u_b$ the bottom velocity, and $c_D$ the bed drag coefficient,
    	\item[-] $\gamma_1,\gamma_2$ are two parameters depending on $\mathcal{R}_p$ \cite{gonzalez2020robust}
    	$$
    	\left(\gamma_1,\gamma_2\right)=\begin{cases}
    		\left(1,0.6\right), & \text{if $\mathcal{R}_p> 2.36$}\\
    		\left(0.586,1.23\right), & \text{if $\mathcal{R}_p \le 2.36$}
    	\end{cases}
    	$$
    	\item[-] $c_b$ is the fractional concentration of sediment suspension near the bed and defined by \cite{bradford1999hydrodynamics}
    	\begin{equation}\label{bed_concentration}
    		c_b = 	c_m(t,x)\,S_b; \qquad S_b=\left(0.4\left(\dfrac{d_s}{D_{sg}}\right)^{1.64}+1.64\right),
    	\end{equation}
    	with $c_m(t,x)$ the depth-averaged volumetric sediment concentration and $D_{sg}$ the geometric mean size of the suspended sediment mixture. In this work, we assume all particles are of equal size, i.e., $D_{sg}=d_s$.
    \end{itemize}
    \subsection{Coupled hydro-morphodynamic model}
    By incorporating the derivations and definitions outlined in Section~\ref{Hydrodynamic} and Section~\ref{morphology}, we obtain a closed first-moment Shallow Water Exner Moment model with Entrainment and Deposition (SWEMED1), where the $1$ denotes the one additional moment $\alpha_1$. In contrast to the existing SWEM in \cite{Garres}, the SWEMED1 (i) introduces a sediment concentration equation, (ii) couples the variable sediment–water mixture density with the momentum equation and first-order moment $\alpha_1$, and (iii) includes additional source terms arising from entrainment and deposition. The resulting system consists of a total of $5$ coupled equations \eqref{complete system} for conservative variables set $(h,hu_m,h\alpha_1,h c_m,h_b)^T\in \mathbb{R}^{5}$.
    \begin{equation*}
    	\underbrace{1}_{\text{mass balance}} + \underbrace{1}_{\text{momentum equation}} +\underbrace{1}_{\text{moment equation}}+ 
    	\textcolor{black}{							\underbrace{1}_{\text{sediment concentration}}}+
    	\textcolor{black}{							\underbrace{1}_{\text{bedload mass balance}}}= \, 5
    \end{equation*} 
    Thus the coupled model is formulated as follows		
    \begin{equation}\label{complete system}
    	\left\{
    	\begin{aligned}
    		&\partial_t h  + \partial_x (h u_m) 
    		&&= \dfrac{E-D}{1-\psi},\\
    		&\partial_t(h u_m)  +\partial_x \left( h \left(u_m^2+ \dfrac{\alpha_1^2}{3}\right) + \dfrac{gh^2}{2} \right)
    		&&= - gh \partial_x h_b - \dfrac{gh^2}{2 	\rho}(\rho_s-\rho_w)\partial_x c_m \\
    		&&&\quad + \dfrac{(E-D) u_b}{1-\psi}- \epsilon |u_b|u_b,\\
    					&\partial_t (h\alpha_1)+ \partial_x \left(2hu_m\alpha_1\right)  
    		&&= u_m\partial_x (h \alpha_1)
    		-\dfrac{gh^2}{2\rho}(\rho_s-\rho_w)\partial_x c_m \\
    		&&&\quad +\dfrac{2(E-D)}{1-\psi}\alpha_1
    		-3\epsilon |u_b|u_b - \dfrac{12\nu}{h}\alpha_1,\\
    		&\partial_t(h c_m) + \partial_x (h c_m u_m) 
    		&&= E-D,\\
    		&\partial_t h_b  + \displaystyle \partial_x \left(\dfrac{Q_b}{1-\psi}\right) 
    		&&= \dfrac{D-E}{1-\psi}.
    	\end{aligned}
    	\right.
    \end{equation}

    For conciseness, we write the SWEMED1 model defined in \eqref{complete system} in compact matrix-vector notation as
	\begin{equation}\label{eq:standard_hswemed1}
		\partial_t W+A(W)\partial_x W=S(W),
	\end{equation}
    where
    \begin{equation}\label{eq:W_standard}
	   W=(h,hu_m,h\alpha_1,hc_m,h_b)^T .
     \end{equation}
	The transport matrix \(A(W)\) of SWEMED1 \eqref{eq:standard_hswemed1} written with respect to the conservative variables \(W\) in \eqref{eq:W_standard}, is
	\begin{equation}\label{eq:A_standard_N1}
		A(W)=
		\begin{pmatrix}
			0&1&0&0&0\\[0.5mm]
			gh-u_m^2-\dfrac{\alpha_1^2}{3}-\dfrac{g h c_m(\rho_s-\rho_w)}{2\rho}
			&2u_m&\dfrac{2\alpha_1}{3}&\dfrac{g h(\rho_s-\rho_w)}{2\rho}&gh\\[0.5mm]
			-2\alpha_1u_m-\dfrac{g h c_m(\rho_s-\rho_w)}{2\rho}
			&2\alpha_1&u_m&\dfrac{g h(\rho_s-\rho_w)}{2\rho}&0\\[0.5mm]
			-c_m u_m&c_m&0&u_m&0\\[0.5mm]
			\delta_h&\delta_q&\delta_q&\delta_c&0
		\end{pmatrix}.
	\end{equation}
	Here \(\delta_h\), \(\delta_q\), and \(\delta_c\) denote derivatives of \(Q_b/(1-\psi)\) with respect to the conservative variables in \eqref{eq:W_standard}. More precisely, \(\delta_q\) denotes the derivative with respect to \(hu_m\). Since \(Q_b\) depends on \(hu_m\) and \(h\alpha_1\) through the same bottom velocity \(u_b=u_m+\alpha_1\) \eqref{bottom_velocity}, the same coefficient \(\delta_q\) appears in both the \(hu_m\) and \(h\alpha_1\)-columns. The coefficient \(\delta_c\) denotes the derivative with respect to \(hc_m\). For the Meyer-Peter--Muller-type closure \eqref{Meyer--Peter--Muller} \cite{Garres} used, they satisfy
	\begin{align}\label{eq:delta_def_standard}
		\delta_q&=
		\dfrac{24Q}{1-\psi}\operatorname{sgn}(u_b)
		\dfrac{\rho\epsilon}{g(\rho_s-\rho_w)d_s}(\theta-\theta_c)_+^{1/2}\dfrac{u_b}{h},\\
		\delta_h&=-u_b\left(1+\dfrac{c_m(\rho_s-\rho_w)}{2\rho}\right)\delta_q,
		\qquad
		\delta_c=u_b\left(\dfrac{\rho_s-\rho_w}{2\rho}\right)\delta_q,
	\end{align}
	where \(\rho\) is the depth-averaged mixture density defined in \eqref{eq:averaged_density}, \(\epsilon\) is the bottom-friction coefficient, \(\theta_c\) is the critical Shields parameter, and \((\cdot)_+\) denotes the positive part; see Section~\ref{bedflux} for the complete expression.
	In the SWEMED1 formulation \eqref{eq:standard_hswemed1}, the suspended concentration is transported with the depth-averaged velocity $u_m$, as in the classical depth-averaged suspended-load closure.\\
	The source term $S(W)$ of SWEMED1 \eqref{eq:standard_hswemed1} is
	\begin{equation}\label{eq:S_standard_N1}
		S(W)=
		\begin{pmatrix}
			\dfrac{E-D}{1-\psi}\\[0.5mm]
			-\epsilon |u_b|u_b+\dfrac{E-D}{1-\psi}u_b\\[0.5mm]
			-3\left(\epsilon |u_b|u_b+4\dfrac{\nu}{h}\alpha_1 \right)
			+2\dfrac{E-D}{1-\psi}\alpha_1\\[0.5mm]
			E-D\\[0.5mm]
			\dfrac{D-E}{1-\psi}
		\end{pmatrix}.
	\end{equation}
    Here, the source term \(S(W)\) contains the contributions from entrainment-deposition exchange, bottom friction, and viscous relaxation of the first velocity-moment coefficient.\\
	In the next section, we derive the equilibrium manifold of SWEMED1 \eqref{eq:standard_hswemed1} and analyze its stability.
	\section{Equilibrium manifold and stability analysis of SWEMED1}\label{sec:water_rest_stability}
	
	For the hyperbolic balance law \eqref{eq:standard_hswemed1}, the source equilibrium manifold is defined by
	\begin{equation}
		\mathcal E=\{W\in G:S(W)=0\},
	\end{equation}
	where \(G\subset\mathbb R^5\) denotes the admissible set of states for \(W\) in \eqref{eq:W_standard}. Thus \(\mathcal E\) is the set of admissible states for which the right-hand-side source forcing terms $S(W)$ \eqref{eq:S_standard_N1} vanish.
	We first derive the fully-settled water-at-rest equilibrium manifold of SWEMED1 \eqref{eq:standard_hswemed1} and then analyze its stability properties.
	\subsection{Fully-settled water-at-rest equilibrium manifold}
	\begin{theorem}\label{thm:water_rest_N1}
		For the SWEMED1 system \eqref{eq:standard_hswemed1} with \(\epsilon>0\) and \(\nu>0\), the fully-settled water-at-rest equilibrium manifold is
		\begin{equation}\label{water_at_rest}
			\Ewr=\{W:\ u_m=0,\ \alpha_1=0,\ c_m=0\}.
		\end{equation}
	\end{theorem}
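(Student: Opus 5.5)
Our plan is to solve $S(W)=0$ componentwise using \eqref{eq:S_standard_N1}, working on the admissible set $G$ with $h>0$, $\rho>0$, $\omega_0>0$, and $0<\psi<1$. We first establish the inclusion $\mathcal E\subseteq\Ewr$, and then the converse.

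\textbf{Step 1 (the net exchange vanishes).} The first and fourth components of $S$ are $(E-D)/(1-\psi)$ and $E-D$. Hence $S(W)=0$ immediately gives $E=D$. Once this holds, every term of $S$ proportional to $E-D$ drops out, including the contributions $(E-D)u_b/(1-\psi)$ and $2(E-D)\alpha_1/(1-\psi)$. The fifth component then vanishes automatically.

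\textbf{Step 2 (the hydrodynamic variables vanish).} With $E=D$, the second component reduces to $-\epsilon|u_b|u_b=0$. Since $\epsilon>0$, this gives $u_b=u_m+\alpha_1=0$. Inserting this into the third component leaves $-12\nu\alpha_1/h=0$. Since $\nu>0$ and $h>0$, we get $\alpha_1=0$, and therefore $u_m=-\alpha_1=0$.

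\textbf{Step 3 (the concentration vanishes).} This is the step where the entrainment closure must be used. Since $u_b=0$, we have $\mathcal Z=0$, hence $E_s=0$ and $E=0$. The condition $E=D$ then becomes $\omega_0 S_b c_m=0$. Both $\omega_0>0$ and $S_b=2.04>0$ (using $D_{sg}=d_s$), so $c_m=0$. Because $h>0$, this is equivalent to $hc_m=0$. This step is the main obstacle. The point is that $E$ depends on the flow only through $|u_b|$ and vanishes when $u_b=0$, so we must check explicitly that $E_s(0)=0$ from the Garc\'ia--Parker formula. We must also make sure that $\omega_0$ is strictly positive, which holds whenever $\rho_s>\rho_w$ and $d_s>0$.

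\textbf{Step 4 (converse inclusion).} Let $W\in G$ satisfy $u_m=\alpha_1=c_m=0$. Then $u_b=0$, so $E=0$, and $c_b=0$, so $D=0$. Every component of $S$ is therefore zero, which shows $\Ewr\subseteq\mathcal E$.

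Finally, $h$ and $h_b$ are left free throughout, so the equilibrium manifold is exactly \eqref{water_at_rest}.
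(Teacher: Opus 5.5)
Your proposal is correct and follows the paper's proof essentially step by step: the fourth row gives $E=D$, the second row then gives $u_b=0$, the third row gives $\alpha_1=0$ and hence $u_m=0$, and $E=0$ forces $D=\omega_0 S_b c_m=0$. Your explicit converse inclusion and the checks that $E_s(0)=0$ and $\omega_0>0$ are minor additions that the paper leaves implicit.
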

	
	\begin{proof}
		We solve \(S(W)=0\) using \eqref{eq:S_standard_N1}. From the fourth row of \(S(W)\), we obtain \(E=D\). Substituting this relation into the second row gives
		$-\epsilon |u_b|u_b=0.$
		Since \(\epsilon>0\), we obtain \(u_b=0\). Using \(E=D\) and \(u_b=0\), the third row reduces to
		$-12\dfrac{\nu}{h}\alpha_1=0.$ Since \(\nu>0\) and \(h>0\), we get \(\alpha_1=0\). Consequently, from $u_b=u_m+\alpha_1=0,$
		we also obtain \(u_m=0\). Finally, \(u_b=0\) implies \(E=0\) for the entrainment closure \eqref{eq:ED_standard}, and together with \(E=D\) this gives \(D=0\). Since \(D=\omega_0S_bc_m\) with \(\omega_0>0\) and \(S_b>0\) \eqref{bed_concentration}, we conclude that \(c_m=0\).
	\end{proof}
	
	Here ``fully-settled'' refers to the condition \(c_m=0\) and ``water-at-rest'' refers to the condition $u_m = 0$ and $\alpha_1 = 0$, effectively leading to a zero velocity profile. Note that suspended water-at-rest states with \(u_m=0\) and \(\alpha_1=0\), but with \(c_m>0\), are not exact equilibria of the full source term. They can, however, describe an intermediate stage (so-called secular equilibrium \cite{Prince1979SecularEquilibrium}) of the relaxation process when the hydrodynamic variables relax faster than the suspended concentration decays through deposition, as observed numerically in Section~\ref{sec:numerical_relaxation} and investigated in Section~\ref{sec:fast_manifold}.
	
	\subsection{Yong structural stability}
	Any homogeneous equilibrium state \(W\in\mathcal E\) can be viewed as a constant solution of \eqref{eq:standard_hswemed1}. Stability of the system depends not only on the source term, but also on the interaction between the source and the transport matrix. We use Yong's structural stability framework \cite{huang2022equilibrium,Yong1999SingularPerturbations}. Writing the transport matrix as \(A(W)\) and denoting the source Jacobian by \(S_W(W)\), Yong's three stability conditions are
	\begin{align*}
	   &\text{(I) Block condition:}\, P(W)S_W(W)P(W)^{-1}=
		\begin{pmatrix}0&0\\0&\widehat T(W)\end{pmatrix},\\
		&\text{(II) Transport symmetrization:}\, A_0(W)A(W)=A(W)^TA_0(W),\\
	   &\text{(III) Dissipation compatibility:}\, A_0(W)S_W(W)+S_W(W)^TA_0(W)
		\preceq -P(W)^T
		\begin{pmatrix}0&0\\0&I_r\end{pmatrix}P(W),
	\end{align*}
	where \(P(W)\) is invertible, \(\widehat T(W)\) is invertible, and \(A_0(W)\) is symmetric positive definite.
	\begin{remark}
		For the one-dimensional case, condition \textup{(II)} can be checked through the eigenstructure of the transport matrix \(A(W)\). If \(A(W)\) has a complete set of real left eigenvectors, then a symmetrizer can be written as $A_0=L^T\Omega L,$
		where the rows of \(L\) are left eigenvectors and \(\Omega\) is a positive diagonal matrix \cite{huang2022equilibrium}. Note that Yong's conditions are sufficient stability conditions, not necessary.
	\end{remark}
	
	\begin{theorem}\label{thm:yong_standard_N1}
		For the SWEMED1 system \eqref{eq:standard_hswemed1} at the fully-settled water-at-rest manifold \eqref{water_at_rest}, Yong's condition \textup{(I)} holds, whereas conditions \textup{(II)} and \textup{(III)} fail.
	\end{theorem}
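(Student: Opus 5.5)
At a point \(W^*\in\Ewr\) of \eqref{water_at_rest} we have \(u_m=\alpha_1=c_m=0\), \(u_b=0\), \(\rho=\rho_w\), and \(h>0\), \(h_b\) arbitrary. The plan is to compute the transport matrix \(A(W^*)\) and the source Jacobian \(S_W(W^*)\) there explicitly, and then test the three conditions one at a time.

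\textbf{Transport matrix.} At \(W^*\) we have \(u_b=0\), so \(\theta=0<\theta_c\). Hence \(\delta_q=\delta_h=\delta_c=0\) by \eqref{eq:delta_def_standard}, and the last row of \eqref{eq:A_standard_N1} vanishes. Set \(\kappa:=gh(\rho_s-\rho_w)/(2\rho_w)\). Then
\[
A(W^*)=\begin{pmatrix}0&1&0&0&0\\ gh&0&0&\kappa&gh\\ 0&0&0&\kappa&0\\ 0&0&0&0&0\\ 0&0&0&0&0\end{pmatrix}.
\]
Its eigenvalues are \(\pm\sqrt{gh}\) and \(0\) with algebraic multiplicity three.

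\textbf{Source Jacobian.} For \(S_W(W^*)\), I would differentiate \eqref{eq:S_standard_N1} using the chain rule through \(u_m=hu_m/h\) and the analogous relations.
\begin{itemize}
\item The friction term \(|u_b|u_b\) has zero derivative at \(u_b=0\).
\item The products \((E-D)u_b\) and \((E-D)\alpha_1\) have zero derivative, because both factors vanish.
\item The term \(-12\nu\alpha_1/h\) contributes \(-12\nu/h^2\) in the \(h\alpha_1\) column.
\item For \(E\), I need \(\partial_{u_b}E_s\) at \(u_b=0\). Since \(E_s\propto\mathcal Z^5\) and \(\mathcal Z\propto|u_b|\), this derivative vanishes.
\item \(D=\omega_0S_b\,(hc_m)/h\) contributes \(-\omega_0S_b/h\) in the \(hc_m\) column, since \(c_m=0\) kills the \(h\)-derivative.
\end{itemize}
Therefore \(S_W(W^*)\) has nonzero entries only in columns 3 and 4. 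These are \(-12\nu/h^2\) at position (3,3), and \(-\mu/(1-\psi)\), \(-\mu\), \(\mu/(1-\psi)\) in rows 1, 4 and 5 of column 4, where \(\mu=\omega_0S_b/h\). This matrix has rank 2, and its nonzero eigenvalues are \(-12\nu/h^2\) and \(-\mu\).

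\textbf{Condition (I).} Because \(S_W\) has rank \(2\) with two distinct nonzero eigenvalues, the zero eigenvalue is semisimple: its geometric multiplicity is \(5-2=3\), which equals its algebraic multiplicity. So \(S_W\) is diagonalizable. Taking \(P\) as the inverse of an explicit eigenvector matrix gives the block form with \(\widehat T=\operatorname{diag}(-12\nu/h^2,-\mu)\), which is invertible since \(\nu,\omega_0,S_b>0\). This proves (I).

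\textbf{Condition (II).} Condition (II) asks for a symmetric positive definite \(A_0\) with \(A_0A\) symmetric. Such a symmetrizer forces \(A\) to be similar to a symmetric matrix, hence diagonalizable. So it suffices to show that the eigenvalue \(0\) is defective. The kernel of \(A(W^*)\) is determined by \(x_2=0\), \(\kappa x_4=0\) and \(ghx_1+\kappa x_4+ghx_5=0\). Since \(\kappa\neq0\) (as \(\rho_s\neq\rho_w\)), this gives \(x_4=0\) and \(x_1=-x_5\), with \(x_3\) free. The kernel is therefore two-dimensional, while the algebraic multiplicity is three. Thus \(A\) is only weakly hyperbolic and (II) fails. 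The same conclusion follows by exhibiting the Jordan chain \(A v=e_3\) with \(v=e_4/\kappa\) adjusted in its \(e_1\) component.

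\textbf{Condition (III).} Condition (III) is stated with an \(A_0\) that must also satisfy (II). Since no admissible symmetric positive definite symmetrizer exists, (III) cannot hold. I expect this to be the delicate point of the argument. To make it robust, I would add a remark showing that even a generic symmetric positive definite \(A_0\) compatible with the symmetric part of \(A\) cannot exist. The cleanest route is the contradiction: (III) presupposes (II), and (II) has been shown to fail.

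The main obstacle is stating this logical dependence carefully, and checking the derivative of \(E\) at \(u_b=0\); if \(\gamma\)-type exponents ever made \(E_s\) non-smooth there, I would argue via one-sided derivatives, both of which vanish.
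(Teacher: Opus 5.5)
Your proposal is correct and follows essentially the paper's proof: the same \(S_W\) and \(A\) at \(\Ewr\); the block form with \(\widehat T=\operatorname{diag}(-12\nu/h^2,-\omega_0S_b/h)\), where the paper writes down an explicit \(P\) built from \(h-hc_m/(1-\psi)\) and \(h_b+hc_m/(1-\psi)\) and you instead argue via semisimplicity of the zero eigenvalue (drop the word ``distinct'', since the two nonzero eigenvalues coincide when \(h=12\nu/(\omega_0S_b)\), though semisimplicity of \(0\) still follows from rank \(2\) and algebraic multiplicity \(3\)); the two-dimensional kernel of \(A\) against algebraic multiplicity three; and the failure of (III) because it uses the same symmetrizer as (II). Do not pursue the optional ``robust'' standalone version of (III): if \(A_0\) is decoupled from (II), then \(A_0=P^T\operatorname{diag}(1,1,1,h^2/(24\nu),h/(2\omega_0S_b))P\) satisfies (III) with equality, so (III) fails only jointly with (II).
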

	
	\begin{proof}
		\emph{Condition \textup{(I)}.\,Block condition:}
		Consider a state $W=(h,0,0,0,h_b)^T\in\Ewr$ \eqref{water_at_rest}.
		At this state, the entrainment closure \eqref{eq:ED_standard} gives \(E=0\) and \(E_W=0\). The derivative of the quadratic bottom-friction term \(\epsilon |u_b|u_b\) also vanishes at \(u_b=0\). Therefore the source Jacobian \(S_W(W)\) is
		\begin{equation}\label{eq:SW_standard_rest}
			S_W(W)=
			\begin{pmatrix}
				0&0&0&-\dfrac{\omega_0S_b}{(1-\psi)h}&0\\[1mm]
				0&0&0&0&0\\[0.5mm]
				0&0&-\dfrac{12\nu}{h^2}&0&0\\[0.5mm]
				0&0&0&-\dfrac{\omega_0S_b}{h}&0\\[0.5mm]
				0&0&0&\dfrac{\omega_0S_b}{(1-\psi)h}&0
			\end{pmatrix}.
		\end{equation}
		To verify condition \textup{(I)}, introduce the source-adapted variables
		\begin{equation}\label{eq:P_standard_N1}
        	Y=PW=
        	\left(
        	h-\dfrac{hc_m}{1-\psi},\
        	hu_m,\
        	h_b+\dfrac{hc_m}{1-\psi},\
        	h\alpha_1,\
        	hc_m
        	\right)^T .
        \end{equation}
    	The corresponding transformation matrices \(P\) and \(P^{-1}\) are
		\begin{equation}\label{eq:P_matrix_standard_N1}
			P=
			\begin{pmatrix}
				1&0&0&-\dfrac{1}{1-\psi}&0\\[0.5mm]
				0&1&0&0&0\\[0.5mm]
				0&0&0&\dfrac{1}{1-\psi}&1\\[0.5mm]
				0&0&1&0&0\\[0.5mm]
				0&0&0&1&0
			\end{pmatrix},
			\qquad
			P^{-1}=
			\begin{pmatrix}
				1&0&0&0&\dfrac{1}{1-\psi}\\[0.5mm]
				0&1&0&0&0\\[0.5mm]
				0&0&0&1&0\\[0.5mm]
				0&0&0&0&1\\[0.5mm]
				0&0&1&0&-\dfrac{1}{1-\psi}
			\end{pmatrix}.
		\end{equation}
		A direct calculation gives
		\begin{equation}\label{eq:block_standard_conditionI}
			P S_W(W)P^{-1}=
			\begin{pmatrix}
				0_{3\times3}&0\\
				0&\widehat T
			\end{pmatrix},
			\qquad
			\widehat T=
			\begin{pmatrix}
				-\dfrac{12\nu}{h^2}&0\\
				0&-\dfrac{\omega_0S_b}{h}
			\end{pmatrix}.
		\end{equation}
		Since \(\nu>0\), \(h>0\), \(\omega_0>0\), and \(S_b>0\), the block \(\widehat T\) is invertible. Hence Yong's condition \textup{(I)} holds.\\
		\emph{Condition \textup{(II)}.\, Transport symmetrization:}
		We now examine the hyperbolicity of the transport matrix \eqref{eq:A_standard_N1} at the fully-settled water-at-rest equilibrium manifold \eqref{water_at_rest}. Substituting \(u_m=0\), \(\alpha_1=0\), and \(c_m=0\) into \eqref{eq:A_standard_N1} gives
		\begin{equation}\label{eq:A_standard_rest}
			A=
			\begin{pmatrix}
				0&1&0&0&0\\
				gh&0&0&\dfrac{g h(\rho_s-\rho_w)}{2\rho_w}&gh\\[2mm]
				0&0&0&\dfrac{g h(\rho_s-\rho_w)}{2\rho_w}&0\\[2mm]
				0&0&0&0&0\\
				0&0&0&0&0
			\end{pmatrix}.
		\end{equation}
		The characteristic polynomial is
		\begin{equation}
			\det(\lambda I-A)=\lambda^3(\lambda^2-gh).
		\end{equation}
		The simple eigenvalues are \(\lambda_\pm=\pm\sqrt{gh}\), with eigenvectors
		\begin{equation}\label{eq:standard_rest_gravity_eigenvectors}
			r_-=(1,-\sqrt{gh},0,0,0)^T,
				\qquad
				r_+=(1,\sqrt{gh},0,0,0)^T.
		\end{equation}
		The remaining eigenvalue \(\lambda=0\) has algebraic multiplicity three, but only two linearly independent eigenvectors,
		\begin{equation}\label{eq:standard_rest_eigenvectors}
			r_0^{(1)}=(1,0,0,0,-1)^T,
				\qquad
				r_0^{(2)}=(0,0,1,0,0)^T.
		\end{equation}
		Thus the transport matrix \(A(W)\) in \eqref{eq:A_standard_rest} is not diagonalizable and is only weakly hyperbolic at the fully-settled water-at-rest equilibrium. Since condition \textup{(II)} requires a positive definite symmetrizer for the transport matrix, and such a symmetrizer cannot be constructed for a weakly hyperbolic matrix, Yong's condition \textup{(II)} fails. Consequently, the full set of Yong structural stability conditions cannot hold, and condition \textup{(III)} also fails.
	\end{proof}
	%\begin{remark}
		The failure of Yong's structural stability conditions for SWEMED1 \eqref{eq:standard_hswemed1} at the fully-settled water-at-rest manifold \eqref{water_at_rest} should be understood as a structural obstruction associated with the strict manifold, not as a defect of the SWEMED1 model. At this manifold, the full equilibrium condition imposes not only \(u_m=0\) and \(\alpha_1=0\), but also \(c_m=0\). In this limiting configuration, the coupling between the transport matrix and the source dissipation does not fit the block structure required by Yong's framework. Since Yong's conditions are sufficient but not necessary for stability, their failure does not imply instability of SWEMED1. It only shows that this particular structural stability framework is too restrictive for SWEMED1 at the strict fully-settled water-at-rest manifold. We therefore continue investigating the linear spectral stability in the next section.
	%\end{remark}
	
	\subsection{Linear spectral stability}
	
	To complement the structural stability condition result above, we examine linear spectral stability of SWEMED1 \eqref{eq:standard_hswemed1}. Here linear spectral stability is understood in the non-growing sense: after linearization, every Fourier mode has a growth rate with non-positive real part \cite{huang2022equilibrium}. Neutral modes are therefore allowed in our definition, and the result does not imply asymptotic decay of all modes. More precisely, we linearize the SWEMED1 system \eqref{eq:standard_hswemed1} around a homogeneous fully-settled water-at-rest state \(W\in\Ewr\) \eqref{water_at_rest}. This gives
	\begin{equation}\label{eq:linearized_standard}
		\partial_t\delta W+A\partial_x\delta W=S_W(W)\delta W,
	\end{equation}
	where \(\delta W\) denotes the deviation from this equilibrium state and \(A\) is given in \eqref{eq:A_standard_rest} and is evaluated at the equilibrium. Substituting the normal-mode ansatz
	\begin{equation}\label{eq:normal_mode}
		\delta W(t,x)=\widehat W e^{\lambda t+i\xi x},
		\qquad \xi\in\R,
	\end{equation}
	where \(\widehat W\) is the constant Fourier-mode amplitude, \(\xi\) is the wave number, and \(\lambda\) is the spectral parameter describing the temporal growth rate, yields the algebraic eigenvalue problem
	\begin{equation}\label{eq:fourier_standard}
		\lambda\widehat W=(S_W(W)-i\xi A)\widehat W.
	\end{equation}
	Using \(S_W(W)\) in \eqref{eq:SW_standard_rest} and \(A(W)\) in \eqref{eq:A_standard_rest}, a direct calculation gives
	\begin{equation}\label{eq:spectral_polynomial_N1}
		\det\left(\lambda I-(S_W(W)-i\xi A)\right)
		=\lambda
		\left(\lambda+\dfrac{\omega_0S_b}{h}\right)
		\left(\lambda+\dfrac{12\nu}{h^2}\right)
		(\lambda^2+gh\xi^2).
	\end{equation}
	Hence
	\begin{equation}\label{eq:spectral_values_N1}
		\sigma(S_W(W)-i\xi A)=
		\left\{
		0,\ 
		-\dfrac{\omega_0S_b}{h},\ 
		-\dfrac{12\nu}{h^2},\ 
		\pm i\sqrt{gh}\,\xi
		\right\}.
	\end{equation}
	All eigenvalues have non-positive real part, i.e.,
	\begin{equation}
		Re(\lambda)\le 0
		\qquad
		\text{for every wave number } \xi\in\R.
	\end{equation}
	Thus the linearized SWEMED1 system has no growing normal modes at the fully-settled water-at-rest equilibrium. The modes \(0\) and \(\pm i\sqrt{gh}\,\xi\) are neutral: they are not damped, but they do not grow. This indicates neutral linear spectral stability of the SWEMED1.
	
	\section{Numerical simulations}\label{sec:numerical_relaxation}
	
	We now investigate the numerical relaxation behavior of the SWEMED1 system \eqref{eq:standard_hswemed1} toward the fully-settled water-at-rest manifold \eqref{water_at_rest}. The purpose is not to verify Yong's sufficient structural stability conditions numerically. Rather, the test checks whether the solution relaxes toward the fully-settled state and whether growing perturbations are observed.
	
	As relevant for the numerical solution, the SWEMED1 system \eqref{eq:standard_hswemed1} contains a non-conservative transport part and a local source part accounting for friction and bed-suspension exchange. Since the source terms may be stiff due to small $h$ and act locally in each grid cell, for the numerical scheme we use an operator-splitting strategy following \cite{huang2022equilibrium}. The transport step is discretized by a non-conservative finite-volume method with explicit RK34 time integration \cite{huang2022equilibrium}. The source step is advanced by the implicit Euler method in each cell, and the resulting nonlinear system is solved locally using Newton's method.
	
	The computation is performed on \(x\in[-1,2]\). We take a perturbed water height and nonzero initial hydrodynamic variables, as in \cite{huang2022equilibrium},
	\begin{align}\label{eq:num_initial_data}
		h(0,x)&=\begin{cases}1.5,&x<0,\\1.0,&x>0,\end{cases}
		&u_m(0,x)&=0.05,
		&\alpha_1(0,x)&=-0.01,
	\end{align}
	with
	\begin{equation}\label{eq:num_initial_data2}
		c_m(0,x)=\begin{cases}0.01,&x<0,\\0,&x>0,\end{cases}
		\qquad
		h_b(0,x)=0.
	\end{equation}
	Open boundary conditions are imposed at both ends of the computational domain. For this test, we use the bottom-friction coefficient \(\epsilon=15\) and the moment relaxation parameter \(\nu=10\) to obtain fast hydrodynamic relaxation. The snapshots are shown at different times.
	To monitor the relaxation of the hydrodynamic variables $u_m$ and $\alpha_1$ toward rest, we use the hydrodynamic deviation measure, compare \cite{huang2022equilibrium},
	\begin{equation}\label{eq:EQ1_definition}
		EQ_1=|u_m|+|\alpha_1|,
	\end{equation}
	which vanishes when the mean velocity \(u_m\) and the first velocity-moment coefficient \(\alpha_1\) vanish.
	
	Figure~\ref{fig:h_um}(a) shows the evolution of the water height \(h\). The initial discontinuity becomes smoother during the evolution. Figure~\ref{fig:h_um}(b) shows the mean velocity \(u_m\), which initially changes due to transport but then decreases toward zero as bottom friction acts. Figure~\ref{fig:h_um}(c) shows that the first velocity-moment coefficient \(\alpha_1\) also relaxes toward zero. In contrast, the suspended concentration \(c_m\) in Figure~\ref{fig:h_um}(d) decays more slowly through deposition $D$. This behavior suggests a separation between the faster hydrodynamic relaxation and the slower deposition process in this test, which motivates the fast-slow source splitting introduced in Section~\ref{sec:fast_manifold}. Figure~\ref{fig:h_um}(e) shows that the reconstructed velocity profile \(u(t,0,\zeta)\) becomes nearly flat with small magnitude, while Figure~\ref{fig:h_um}(f) confirms the decay of the hydrodynamic deviation measure \(EQ_1\) \eqref{eq:EQ1_definition}. Altogether, the numerical solution relaxes toward the fully-settled water-at-rest state and does not show growing perturbations. This behavior is consistent with the linear spectral result in Section~\ref{sec:water_rest_stability}: the SWEMED1 system fails Yong's sufficient structural conditions at the strict equilibrium, but the linearized model does not contain growing normal modes.
	
	\begin{figure}[t]
		\sidecaption[t]
		\resizebox{0.65\textwidth}{!}{%
			\begin{tabular}[b]{cc}
				\includegraphics[width=0.31\textwidth]{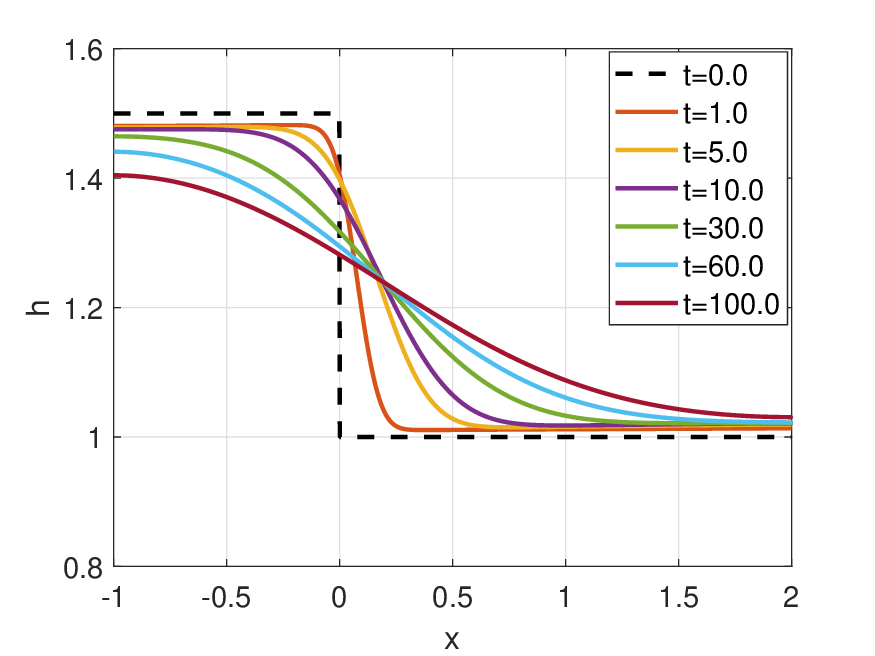} &
				\includegraphics[width=0.31\textwidth]{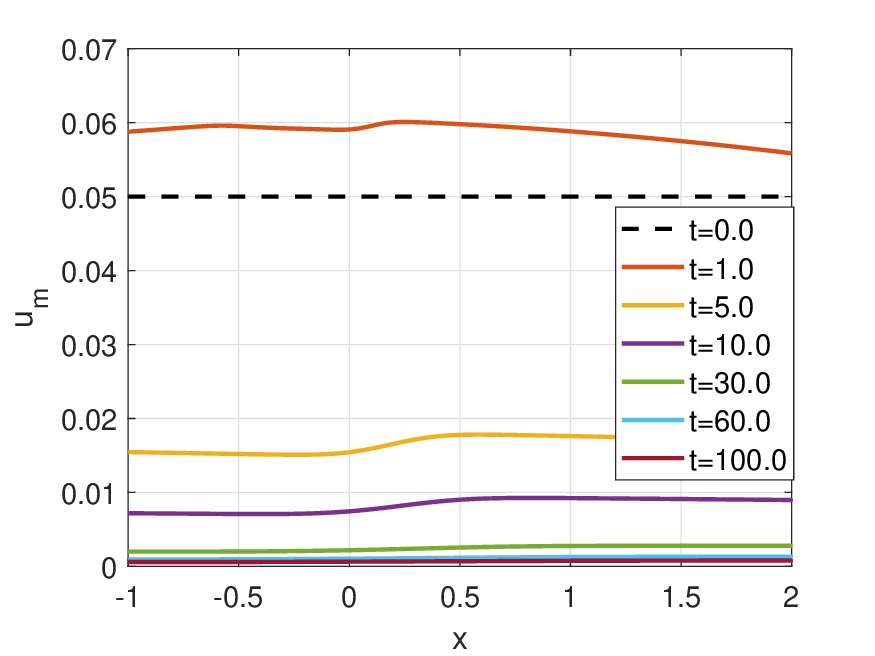} \\
				{\scriptsize \textbf{a} water height \(h\)} &
				{\scriptsize \textbf{b} mean velocity \(u_m\)} \\[0.6em]
				\includegraphics[width=0.31\textwidth]{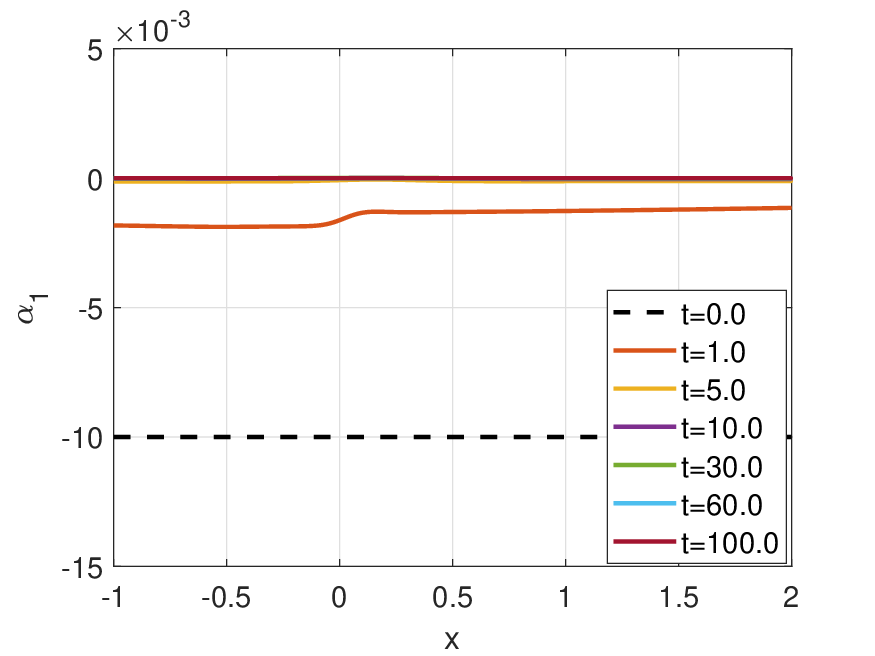} &
				\includegraphics[width=0.31\textwidth]{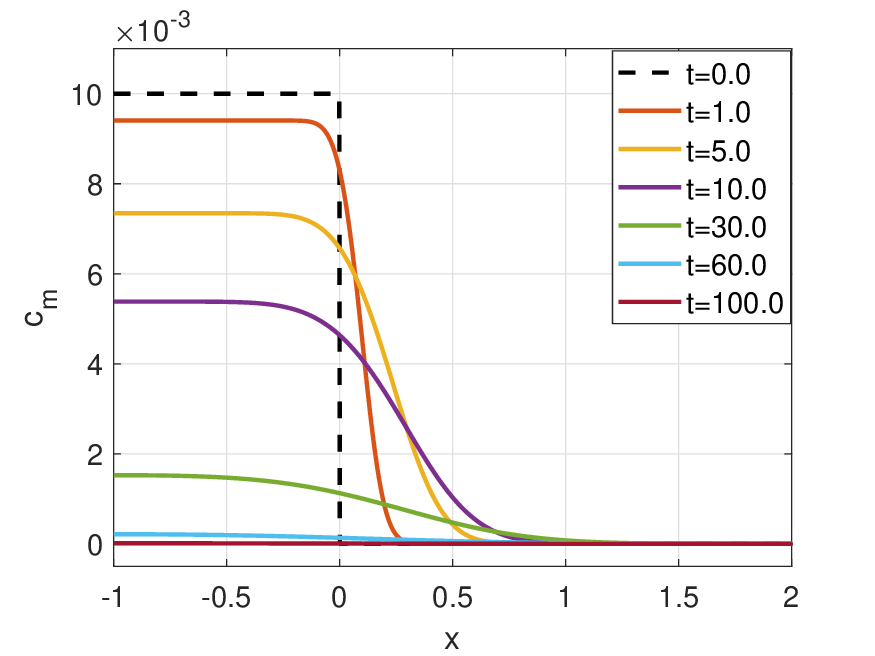} \\
				{\scriptsize \textbf{c} first velocity-moment coefficient \(\alpha_1\)} &
				{\scriptsize \textbf{d} suspended concentration \(c_m\)} \\[0.6em]
				\includegraphics[width=0.31\textwidth]{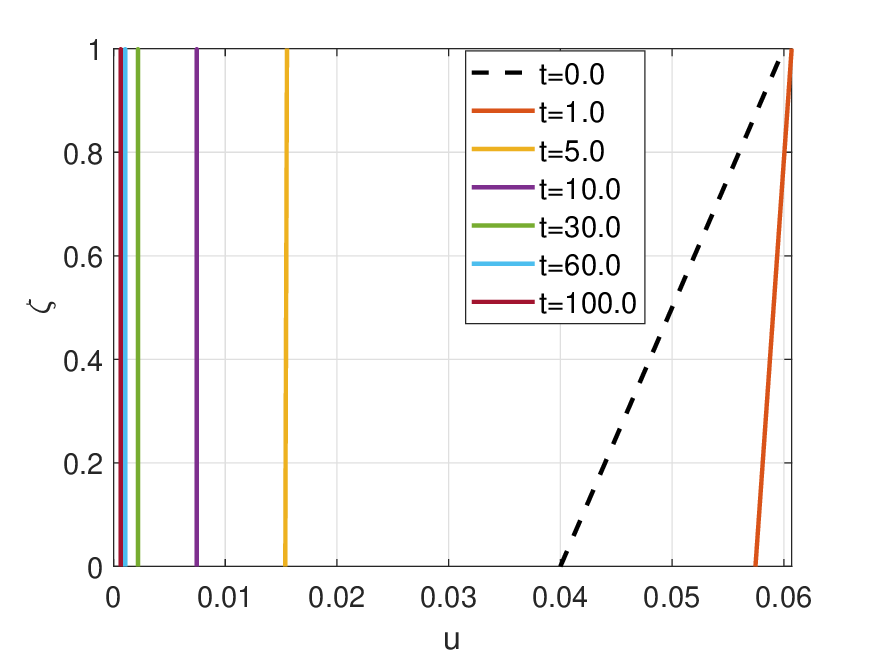} &
				\includegraphics[width=0.31\textwidth]{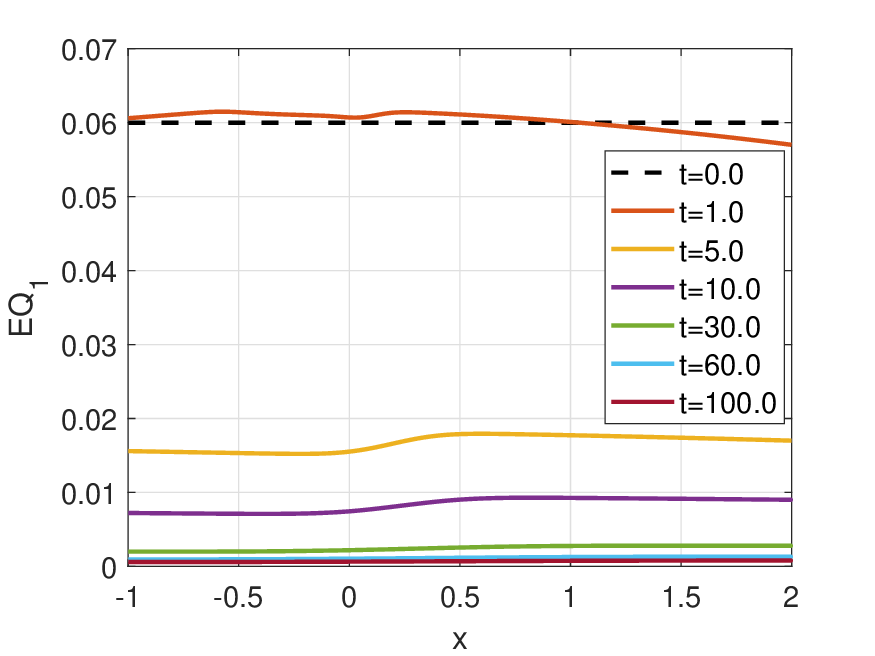} \\
				{\scriptsize \textbf{e} velocity profile \(u(\zeta)\) at \(x=0\)} &
				{\scriptsize \textbf{f} hydrodynamic deviation \(EQ_1\)}
		\end{tabular}}
		\Description{Six-panel numerical relaxation plot showing water height, mean velocity, first velocity-moment coefficient, suspended concentration, reconstructed velocity profile, and hydrodynamic deviation for the fully-settled water-at-rest test.}
		\caption{Fully-settled water-at-rest relaxation test for the SWEMED1 system \eqref{eq:standard_hswemed1} with bottom-friction coefficient $\epsilon = 15$ and moment relaxation
			parameter $\nu = 10$ at times $t = 0, 1, 5, 10, 30, 60, 100$. The hydrodynamic velocity variables $u_m$ and $\alpha_1$ relax toward rest, while the suspended concentration $c_m$ decays more slowly through deposition.}
		\label{fig:h_um}
	\end{figure}
	\section{Fast-manifold analysis for SWEMED1}\label{sec:fast_manifold}
	The fully-settled water-at-rest equilibrium \eqref{water_at_rest} imposes $c_m=0$. The numerical simulation in Section~\ref{sec:numerical_relaxation}, however, shows an intermediate regime in which $u_m$ and $\alpha_1$ are already close to zero while $c_m$ remains positive. This is sometimes referred to as secular equilibrium \cite{Prince1979SecularEquilibrium}. To describe this stage analytically, we introduce the fast-slow source splitting
	\begin{equation}\label{eq:source_splitting}
		S_\delta(W)=S^{\mathrm{fast}}(W)+\delta \cdot S^{\mathrm{slow}}(W),
		\qquad 0<\delta\ll1.
	\end{equation}
	The fast source \(S^{\mathrm{fast}}(W)\) contains bottom friction, moment relaxation, and entrainment, whereas the slow source \(S^{\mathrm{slow}}(W)\) contains deposition. Thus
	\begin{equation}\label{eq:fast_slow_sources}
	\begin{aligned}
	S^{\mathrm{fast}}(W)&=\begin{pmatrix}
	\dfrac{E}{1-\psi}\\[1mm]
	-\mu u_b+\dfrac{E}{1-\psi}u_b\\[1mm]
	-3\left(\mu u_b+4\dfrac{\nu}{h}\alpha_1\right)+2\dfrac{E}{1-\psi}\alpha_1\\[1mm]
	E\\[1mm]
	-\dfrac{E}{1-\psi}
	\end{pmatrix},
	&
	S^{\mathrm{slow}}(W)&=\begin{pmatrix}
	-\dfrac{D}{1-\psi}\\[1mm]
	-\dfrac{D}{1-\psi}u_b\\[1mm]
	-2\dfrac{D}{1-\psi}\alpha_1\\[1mm]
	-D\\[1mm]
	\dfrac{D}{1-\psi}
	\end{pmatrix}.
	\end{aligned}
	\end{equation}
    
	Here $\mu>0$ denotes the coefficient in the local linear friction approximation used in the fast structural calculation. For this analysis, the quadratic bottom-friction term \(\epsilon |u_b|u_b\) used in \eqref{depth_avg_momentum} and \eqref{final_higher_average_equation}, is replaced by the linear damping term \(\mu u_b\).
    This keeps the fast friction contribution active on the fast manifold \eqref{eq:Mfast}.  In the limit $\delta\to0$, the leading-order source equilibrium is determined by $S^{\mathrm{fast}}(W)=0$, which we investigate below.
    \begin{theorem}\label{thm:fast_manifold_source}
		In the fast limit \(\delta\to 0\) of \eqref{eq:source_splitting}, the fast source equilibrium manifold is the suspended water-at-rest.
		\begin{equation}\label{eq:Mfast}
			\Mfast=\{W\in G: u_m=0,\ \alpha_1=0,\ c_m > 0\}.
		\end{equation}
	\end{theorem}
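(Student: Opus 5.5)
The plan is to solve \(S^{\mathrm{fast}}(W)=0\) row by row from \eqref{eq:fast_slow_sources}, in the same way as in the proof of Theorem~\ref{thm:water_rest_N1}, and then check the converse inclusion. The essential difference from Theorem~\ref{thm:water_rest_N1} is that deposition \(D\) has been moved into \(S^{\mathrm{slow}}\). The fast rows therefore never involve \(c_m\), and the concentration is left undetermined.

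\emph{Step 1: the fast source forces rest.}
\begin{itemize}
\item From the fourth row of \(S^{\mathrm{fast}}\) we obtain \(E=0\).
\item Substituting \(E=0\) into the second row gives \(-\mu u_b=0\). Since \(\mu>0\), this yields \(u_b=0\).
\item With \(E=0\) and \(u_b=0\), the third row reduces to \(-12\nu\alpha_1/h=0\). Since \(\nu>0\) and \(h>0\) on \(G\), this yields \(\alpha_1=0\).
\item Then \(u_b=u_m+\alpha_1=0\) from \eqref{bottom_velocity} gives \(u_m=0\).
\item The first and fifth rows are multiples of \(E\) and vanish automatically.
\end{itemize}

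\emph{Step 2: consistency and the converse inclusion.}
\begin{itemize}
\item For any state with \(u_m=\alpha_1=0\) we have \(u_b=0\), hence \(\mathcal Z=0\) and \(E_s=0\) in \eqref{eq:ED_standard}. This gives \(E=0\).
\item Every component of \(S^{\mathrm{fast}}\) is then a linear combination of \(E\), \(u_b\) and \(\alpha_1\), so \(S^{\mathrm{fast}}(W)=0\) for every such state.
\item This holds irrespective of the values of \(h\), \(h_b\) and \(c_m\).
\item In particular, the fast equilibrium set is \(\{u_m=0,\ \alpha_1=0\}\) with \(c_m\) free. This is the point where Theorem~\ref{thm:fast_manifold_source} departs from Theorem~\ref{thm:water_rest_N1}: there, the relation \(E=D\) together with \(D=\omega_0S_bc_m\) forced \(c_m=0\), and here no row carries that information.
\end{itemize}

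The only delicate point is the sign condition \(c_m>0\) in \eqref{eq:Mfast}, since the fast source alone imposes only the admissibility constraint \(c_m\ge 0\). I would argue as follows. The slice \(c_m=0\) of the fast equilibrium set coincides exactly with the fully-settled manifold \(\Ewr\) of Theorem~\ref{thm:water_rest_N1}, which is already characterized. The genuinely new, suspended branch, which describes the intermediate secular-equilibrium stage observed in Section~\ref{sec:numerical_relaxation}, is therefore the part with \(c_m>0\). The fast equilibrium set decomposes as \(\Ewr\cup\Mfast\), and \(\Mfast\) is the suspended water-at-rest manifold claimed in the statement.

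I expect Step~1 to be routine. The main care is needed in justifying that \(E=0\) holds if and only if \(u_b=0\). For this I would use that \(E_s\) is strictly increasing in \(\mathcal Z\ge0\) with \(E_s(0)=0\), and that \(\mathcal Z\) is proportional to \(|u_b|\) with positive factor \(\gamma_1\sqrt{c_D}\,\mathcal R_p^{\gamma_2}/\omega_0\). The factor \((1-\psi)\) in \(E=\omega_0(1-\psi)E_s\) is positive for \(\psi<1\).
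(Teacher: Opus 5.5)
Your proof is correct, and Step~1 is exactly the paper's argument. The fourth, second and third rows of $S^{\mathrm{fast}}$ give in turn $E=0$, $u_b=0$, $\alpha_1=0$ and hence $u_m=0$, and $c_m$ is left undetermined because deposition sits in $S^{\mathrm{slow}}$.

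Step~2 makes explicit two points the paper leaves implicit. First, it supplies the converse inclusion; only $u_b=0\Rightarrow E=0$ is needed there, so the ``if and only if'' care is more than required. Second, it notes that the zero set of $S^{\mathrm{fast}}$ also contains the slice $c_m=0$, so the condition $c_m>0$ in \eqref{eq:Mfast} selects the suspended branch rather than following from the equations.
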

	\begin{proof}
		From the fourth row of $S^{\mathrm{fast}}(W)=0$, we obtain $E=0$. Substituting this into the second row gives $u_b=0$. The third row then reduces to $-12(\nu/h)\alpha_1=0$, so that $\alpha_1=0$ since $\nu>0$ and $h>0$. Hence $u_m=0$ follows from $u_b=u_m+\alpha_1=0$. Since deposition belongs to the slow source, $c_m$ is not forced to vanish at the fast time scale.
	\end{proof}
    The manifold \eqref{eq:Mfast} is the leading-order source equilibrium obtained in the fast limit \(\delta\to 0\), not the full source equilibrium of SWEMED1. It separates the fast hydrodynamic relaxation from the slow deposition process and therefore gives a description of intermediate states where the hydrodynamic variables have reached rest while suspended sediment is still present.

	We now check whether this fast-manifold \eqref{eq:Mfast} viewpoint alone removes the transport
    obstruction found at the fully-settled water-at-rest manifold \eqref{water_at_rest}. The answer is negative if the SWEMED1 concentration
    equation retains the depth-averaged transport velocity \(u_m\).

\begin{theorem}\label{thm:fast_manifold_failure}
	For SWEMED1 \eqref{eq:standard_hswemed1} at the fast suspended water-at-rest manifold
	\eqref{eq:Mfast}, obtained from the fast limit \(\delta\to0\) of
	\eqref{eq:source_splitting}, Yong's condition \textup{(I)} holds for
	the fast source, whereas condition \textup{(II)} fails. Consequently,
	the full set of Yong structural stability conditions cannot be verified
	at this manifold.
\end{theorem}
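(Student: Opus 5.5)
The plan is to repeat the two-step argument of Theorem~\ref{thm:yong_standard_N1}, now at a state
\[
W=(h,0,0,hc_m,h_b)^T\in\Mfast, \qquad c_m>0.
\]
First I compute the Jacobian of the fast source $S^{\mathrm{fast}}$ from \eqref{eq:fast_slow_sources} and bring it to block form; this gives condition (I). Then I evaluate the transport matrix \eqref{eq:A_standard_N1} at $W$ and count eigenvectors of the zero eigenvalue, showing that the matrix is again only weakly hyperbolic; this gives the failure of condition (II).

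\emph{Condition (I).}
\begin{itemize}
\item The entrainment $E$ depends on $u_b$ through $\mathcal Z^5\propto|u_b|^5$. Hence $E=0$ and $E_W=0$ at $u_b=0$.
\item So all $E$-terms drop out of the Jacobian at $W$. The term $E u_b/(1-\psi)$ in row 2 and the term $2E\alpha_1/(1-\psi)$ in row 3 are products of two factors vanishing at $W$, so they contribute nothing either.
\item Write $u_b=(hu_m+h\alpha_1)/h$. Then $\partial_h u_b=-u_b/h=0$ at $W$, and the only nonzero entries come from the linear friction $-\mu u_b$ and the moment relaxation. Rows $1,4,5$ of $S^{\mathrm{fast}}_W$ vanish. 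Row 2 is $(0,-\mu/h,-\mu/h,0,0)$. Row 3 is $(0,-3\mu/h,-3\mu/h-12\nu/h^2,0,0)$.
\item Take $P$ to be the permutation that moves $(hu_m,h\alpha_1)$ to the last two positions, e.g. $Y=(h,hc_m,h_b,hu_m,h\alpha_1)^T$. Then $PS^{\mathrm{fast}}_WP^{-1}=\mathrm{diag}(0_{3\times3},\widehat T)$ with
\[
\widehat T=\begin{pmatrix}-\mu/h&-\mu/h\\-3\mu/h&-3\mu/h-12\nu/h^2\end{pmatrix},\qquad
\det\widehat T=\frac{12\mu\nu}{h^3}>0 .
\]
\item Since $\mu,\nu,h>0$, $\widehat T$ is invertible and (I) holds.
\end{itemize}

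\emph{Condition (II).}
\begin{itemize}
\item In $\delta_q$ the factor $u_b$ vanishes at $W$, so $\delta_q=\delta_h=\delta_c=0$ and the Exner row of $A$ is zero.
\item The concentration row reduces to $(0,c_m,0,0,0)$, because the transport velocity is $u_m=0$.
\item With $k=(\rho_s-\rho_w)/(2\rho)$, the remaining rows are
\[
(0,1,0,0,0),\qquad (gh-ghc_mk,\,0,\,0,\,ghk,\,gh),\qquad (-ghc_mk,\,0,\,0,\,ghk,\,0).
\]
\item The characteristic polynomial is $\lambda^3(\lambda^2-gh)$: the $c_m$-dependent contributions cancel, since $-ghc_mk+ghk\cdot c_m=0$. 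So the nonzero eigenvalues are $\pm\sqrt{gh}$, and $\lambda=0$ has algebraic multiplicity $3$.
\item The rank of $A$ is $3$. Row 4 is $c_m$ times row 1. Rows 1, 2 and 3 are independent because of the $gh$ entry in the $h_b$-column of row 2 and the $ghk\neq0$ entry of row 3.
\item Therefore $\ker A$ has dimension $2<3$. I would exhibit the two kernel vectors explicitly ($v_2=0$, $v_4=c_mv_1$, $v_5$ fixed by row 2, $v_3$ free).
\item So $A$ is not diagonalizable and admits no symmetric positive definite symmetrizer. Condition (II) fails, and with it the full set of Yong conditions.
\end{itemize}

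The main obstacle is the bookkeeping in the Jacobian and at the degenerate points: justifying $E_W=0$ and $\delta_{(\cdot)}=0$ despite the non-smooth factors $|u_b|$ and $(\theta-\theta_c)_+^{1/2}$. Since $u_b=0$ forces $\theta=0<\theta_c$, the positive part vanishes identically in a neighbourhood of $W$, so these derivatives genuinely vanish. The second delicate point is checking that the rank count is unaffected by $c_m>0$, i.e. that the concentration row stays a multiple of the mass row. This is exactly the transport-closure obstruction the statement refers to.
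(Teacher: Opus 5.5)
Your proposal is correct and follows the paper's route. For (I), you permute to $Y=(h,hc_m,h_b,hu_m,h\alpha_1)^T$ to obtain the invertible block $\widehat T=B$ with $\det B=12\mu\nu/h^3>0$; for (II), you show the transport matrix at $\Mfast$ has $\det(\lambda I-A)=\lambda^3(\lambda^2-gh)$ but only a two-dimensional kernel. Your kernel ($v_2=0$, $v_4=c_m v_1$, $v_5=-v_1$, $v_3$ free) is in fact the accurate one: the paper's kernel vector $(1,0,-1,0,0)^T$ in $Y$-ordering drops the $c_m$-component and should read $(1,c_m,-1,0,0)^T$, although the geometric multiplicity two and the conclusion are unchanged.
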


\begin{proof}
	\emph{Condition \textup{(I)}.\,Block condition:}
	We use the reordered variables
	\begin{equation}\label{eq:Y_fast_ordering}
	   Y=(h,hc_m,h_b,hu_m,h\alpha_1)^T .
    \end{equation}
	At a state \(W\in\Mfast\) defined in \eqref{eq:Mfast}, and using the fast source from the splitting \eqref{eq:source_splitting}, the source
	Jacobian with respect to the variables \(Y\) in \eqref{eq:Y_fast_ordering}
	has the block form
	\begin{equation}\label{eq:SY_fast_standard_block}
		S^{\mathrm{fast}}_Y(Y)=
		\begin{pmatrix}
			0_{3\times3}&0\\
			0&B
		\end{pmatrix},
		\qquad
		B=\begin{pmatrix}
			-\dfrac{\mu}{h}&-\dfrac{\mu}{h}\\[2mm]
			-\dfrac{3\mu}{h}&-\left(\dfrac{3\mu}{h}+\dfrac{12\nu}{h^2}\right)
		\end{pmatrix}.
	\end{equation}
	Since \(h>0\), \(\mu>0\), and \(\nu>0\), the block \(B\) is invertible.
	Hence Yong's condition \textup{(I)} holds for the fast source.\\

	\emph{Condition \textup{(II)}.\,Transport symmetrization:}
	We now examine the hyperbolicity of the transport matrix \(A(W)\) of SWEMED1 \eqref{eq:A_standard_N1} at the fast suspended water-at-rest manifold \eqref{eq:Mfast}. With respect to the conservative variables \(W=(h,hu_m,h\alpha_1,hc_m,h_b)^T\) in \eqref{eq:W_standard}, evaluating \eqref{eq:A_standard_N1} at \(u_m=0\), \(\alpha_1=0\), \(c_m>0\), and hence \(u_b=0\), gives
	
	\begin{equation}\label{eq:AW_fast_standard}
	A_{\Mfast}(W)=
	\begin{pmatrix}
	0&1&0&0&0\\
	gh-c_m\beta&0&0&\beta&gh\\
	-c_m\beta&0&0&\beta&0\\
	0&c_m&0&0&0\\
	0&0&0&0&0
	\end{pmatrix}, \qquad \beta=\dfrac{gh(\rho_s-\rho_w)}{2\rho}.
	\end{equation}
   Using the same reordered variables \(Y=(h,hc_m,h_b,hu_m,h\alpha_1)^T\) defined in \eqref{eq:Y_fast_ordering}, the transport matrix \eqref{eq:AW_fast_standard} becomes
	\begin{equation}\label{eq:AY_fast_standard}
	A^Y_{\Mfast}=
	\begin{pmatrix}
	0&0&0&1&0\\
	0&0&0&c_m&0\\
	0&0&0&0&0\\
	gh-c_m\beta&\beta&gh&0&0\\
	-c_m\beta&\beta&0&0&0
	\end{pmatrix}.
	\end{equation}
    A direct calculation from
	\eqref{eq:AY_fast_standard} gives
	\begin{equation}\label{eq:char_fast_standard}
		\det(\lambda I-A^Y_{\Mfast})=\lambda^3(\lambda^2-gh).
	\end{equation}
	Thus, the zero eigenvalue has algebraic multiplicity three. For
	\(\lambda=0\), let \(r=(r_1,r_2,r_3,r_4,r_5)^T\) be a right eigenvector,
	with components ordered according to \(Y=(h,hc_m,h_b,hu_m,h\alpha_1)^T\) in
	\eqref{eq:Y_fast_ordering}. The equation \(A^Y_{\Mfast}r=0\) gives
	\(r_4=0\), \(c_m r_4=0\), and
	\begin{equation*}
		gh r_1-\beta r_2+gh r_3=0,
		\qquad -\beta r_2=0.
	\end{equation*}
	Since \(\beta=\dfrac{gh(\rho_s-\rho_w)}{2\rho}>0\), we obtain
	\(r_2=0\) and \(r_1=-r_3\), while \(r_5\) is free. Thus, the kernel is
	spanned, for instance, by
	\begin{equation*}
		(1,0,-1,0,0)^T,
		\qquad
		(0,0,0,0,1)^T .
	\end{equation*}
	The zero eigenvalue, therefore, has geometric multiplicity two, while its algebraic multiplicity is three. Hence \(A^Y_{\Mfast}\) in
	\eqref{eq:AY_fast_standard} is not diagonalizable and is only weakly
	hyperbolic at the fast suspended water-at-rest manifold. Since Yong's
	condition \textup{(II)} requires a positive definite symmetrizer for the
	transport matrix, condition \textup{(II)} fails. Consequently, the full
	set of Yong's structural stability conditions cannot be verified at the fast
	suspended water-at-rest manifold.
\end{proof}
    
    The fast-slow splitting, therefore, changes the source equilibrium structure but does not, by itself, repair the transport degeneracy. The remaining obstruction is tied to the suspended-concentration transport closure: when the concentration is transported with $u_m$, which vanishes at water-at-rest equilibrium, the concentration row does not provide an additional independent transport coupling at water-at-rest. This suggests that a future extension should examine effective transport velocities for suspended sediment, derived consistently with the reduced modelling assumptions. One way to do so would be to replace the $u_m$-based concentration flux by a flux derived from an effective sediment transport velocity. The modelling and stability analysis of such closures is left for future work.
    
	\section{Conclusions}\label{sec:conclusion}
	In this work, we derived and analyzed SWEMED1, a first-moment shallow water Exner moment model with sediment entrainment and deposition. The model couples the water mass balance, depth-averaged momentum balance, first velocity-moment equation, suspended concentration equation, and Exner bed-evolution equation. The suspended concentration is transported as a depth-averaged scalar and feeds back into the hydrodynamics through the depth-averaged mixture density.
    
    An analysis of the source term showed that for non-vanishing bottom friction, moment relaxation, settling velocity, and near-bed concentration factor, the source term forces the mean velocity, the first velocity-moment coefficient, and the suspended concentration to vanish in equilibrium. This results in the fully-settled water-at-rest equilibrium manifold. We then examined its stability using Yong's structural stability framework. The source Jacobian satisfies Yong's block condition, but the transport matrix at the fully-settled water-at-rest manifold is only weakly hyperbolic. %More precisely, the zero eigenvalue has algebraic multiplicity three but geometric multiplicity two. 
    Therefore, Yong's transport symmetrizer condition cannot be verified, and the full set of Yong structural stability conditions cannot be established at this manifold. However, this obstruction should not be interpreted as instability of SWEMED1, since Yong's conditions are sufficient but not necessary.

    To complement the structural result, we performed a linear spectral stability calculation around a homogeneous fully-settled water-at-rest state. The resulting spectrum contains no eigenvalue with a positive real part. The linearized system thus has no growing normal modes in this setting. A numerical test is consistent with this non-growing behavior and in addition, shows that the hydrodynamic variables approach rest while the suspended concentration may decay on a slower time scale.

    Motivated by this observation, we then introduced a fast-slow source splitting and considered the fast limit. The corresponding fast equilibrium is a suspended water-at-rest manifold with non-vanishing concentration. This manifold has a different source-equilibrium structure compared to the fully-settled water-at-rest manifold. However, when the suspended concentration is still transported with the depth-averaged velocity, the transport matrix at the fast manifold remains weakly hyperbolic. Thus, the fast-slow splitting by itself does not remove the transport obstruction. These results show that the remaining obstruction is linked to the transport closure in the suspended-concentration equation. This suggests that future extensions should examine effective transport velocities for suspended sediment, derived consistently with the reduced modelling assumptions and analyzed together with the corresponding equilibrium and stability structure.

	\section*{Acknowledgements}
		The authors gratefully acknowledge fruitful discussions with Prof. Wen-An Yong. They also acknowledge financial support from the KU Leuven Global PhD Partnership fellowship for a joint PhD with Peking University (grant agreement GPPKU/21/009). This work is part of the HiWAVE project (file no. VI.Vidi.233.066) within the NWO Vidi ENW programme, partly funded by the Dutch Research Council (NWO; grant DOI: 10.61686/CBVAB59929).
		
	\ethics{Competing Interests}{The authors have no conflicts of interest to declare that are relevant to the content of this paper.}

 \section*{Appendices}
\appendix
\setcounter{section}{0}
\renewcommand{\thesection}{~\Alph{section}}
\renewcommand{\thesubsection}{\thesection.\arabic{subsection}}
\section{Derivation of the complete reference system}\label{app:1}
  
    \subsection{Mapping of the mass balance}
    From the divergence-free conditions $\nabla \cdot \mathbf{u}=0$, we have
    \begin{equation}\label{r1.1}
        \partial_x u + \partial_z w = 0.
    \end{equation}
    After multiplying \eqref{r1.1} by $h$ and applying the differential operator \eqref{diffop1}, the mapped mass balance yields
    \begin{equation}
        \partial_x \left(h \Tilde{u}\right)-\partial_\zeta \left[\partial_x(\zeta h+h_b) \Tilde{u} \right] + \partial_\zeta \Tilde{w} = 0.
    \end{equation}
    \subsection{Mapping of the momentum balance}
    From the reference system \eqref{reference system22},
    \begin{equation}\label{ref:A2}
        \partial_t u + \partial_x u^2 +\partial_z (uw) = -\dfrac{1}{\rho} \partial_x p + \dfrac{1}{\rho} \partial_z \sigma_{xz}.
    \end{equation}
    After multiplying \eqref{ref:A2} by $h$ and applying the differential operator \eqref{diffop1}, the mapped momentum balance yields 
    \begin{align}
        \begin{split}
                \partial_t (h \Tilde{u}) - \partial_\zeta \left(\Tilde{u} \partial_t\left(\zeta h+h_b\right)\right)& + \partial_x(h \Tilde{u}^2) -\partial_\zeta \left(\Tilde{u}^2 \partial_x\left(\zeta h+h_b\right)\right) + \partial_\zeta(\Tilde{u}\Tilde{w}) \\
                &+ \dfrac{1}{\rho} \partial_x (h \Tilde{p})- \dfrac{1}{\rho} \partial_\zeta \left(\Tilde{p}\partial_x(\zeta h+h_b)\right) = \dfrac{1}{\rho}\partial_\zeta \Tilde{\sigma}_{xz},
        \end{split}
        \\[2ex]
           \begin{split}\label{ref:A2_p}
              \Longrightarrow  \partial_t (h \Tilde{u})+ \partial_x(h \Tilde{u}^2) + & \partial_\zeta \left[\Tilde{u}\left( \underbrace{\Tilde{w} -\Tilde{u}\partial_x\left(\zeta h+h_b\right)- \partial_t\left(\zeta h+h_b\right)}_{\text{vertical coupling}}\right)\right] \\
                &+ \underbrace{ \dfrac{1}{\rho} \partial_x (h \Tilde{p})- \dfrac{1}{\rho} \partial_\zeta \left(\Tilde{p}\partial_x(\zeta h+h_b)\right)}_{\text{pressure}} = \dfrac{1}{\rho}\partial_\zeta \Tilde{\sigma}_{xz}.
        \end{split}
    \end{align}
    From the hydrostatic balance, we have $p(t,x,z)= (h_s-z)\rho g$. After mapping from $z$ to $\zeta$ we get
    \begin{align}
        p(t,x,\zeta)  = (h_s-(\zeta h+ h_b)) \rho g
                      = h (1 -\zeta)\rho g.
    \end{align}
    Therefore, 
	    \begin{align}\label{pressure_der}
	    &\dfrac{1}{\rho}\partial_x(h\Tilde p)
	    -\dfrac{1}{\rho}\partial_\zeta\!\left(\Tilde p\,\partial_x(\zeta h+h_b)\right)\notag\\
	    &\quad=\dfrac{1}{\rho}\partial_x\!\left(\rho g h^2(1-\zeta)\right)
	    -\dfrac{1}{\rho}\partial_\zeta\!\left(\rho gh(1-\zeta)\partial_x(\zeta h+h_b)\right)\notag\\
	    &\quad=2gh(1-\zeta)\partial_x h
	    +\dfrac{gh^2}{\rho}(1-\zeta)\partial_x\rho
	    -gh\partial_xh+2gh\zeta\partial_xh+gh\partial_xh_b\notag\\
	    &\quad=gh\partial_x(h+h_b)
	    +\dfrac{gh^2}{\rho}(1-\zeta)\partial_x\rho .
	    \end{align}
    Now we substitute \eqref{pressure_der} to \eqref{ref:A2_p}
	    \begin{multline}\label{ref:A2_subp}
	    \partial_t(h\Tilde u)+\partial_x(h\Tilde u^2)
	    +\partial_\zeta\!\left[\Tilde u\left(\Tilde w
	    -\Tilde u\partial_x(\zeta h+h_b)-\partial_t(\zeta h+h_b)\right)\right]\\
	    +gh\partial_x(h+h_b)+\dfrac{gh^2}{\rho}(1-\zeta)\partial_x\rho
	    =\dfrac{1}{\rho}\partial_\zeta\Tilde\sigma_{xz}.
	    \end{multline}
    
    \subsection{Mapping of the sediment concentration equation}
    From the reference system \eqref{reference system23}, we have
    \begin{equation}\label{ref:A3}
        \partial_t c +\partial_x (cu) + \partial_z (cw) = 0.
    \end{equation}
    After multiplying \eqref{ref:A3} by $h$ and applying the differential operator \eqref{diffop1}, the mapped sediment concentration equation yields
    \begin{align}
     \partial_t (h \Tilde{c})  + \partial_x \left(h \Tilde{c}\Tilde{u}\right) +\partial_\zeta \left[\Tilde{c} \left(\Tilde{w}- \partial_x(\zeta h+h_b)\Tilde{u}-\partial_t(\zeta h+h_b) \right) \right]=0.
    \end{align}

    \section{Averages of the Horizontal Momentum Balances}\label{app:2}
    \subsection{Depth-averaging the momentum balance}\label{averaged momentum balance}
    We consider the momentum balance \eqref{eq:resolved  system1_2} and after integrating $\int_{0}^{1}\cdot \, d\zeta$, we get
	    \begin{align}\label{ref:A2_integration}
	    &\partial_t\left(h\int_0^1\Tilde u\,d\zeta\right)
	    +\partial_x\left(h\int_0^1\Tilde u^2\,d\zeta\right)
	    +gh\partial_xh+gh\partial_xh_b
	    +\dfrac{gh^2}{2\rho}\partial_x\rho\notag\\
	    &\qquad=\dfrac{1}{\rho}\int_0^1\partial_\zeta\Tilde\sigma_{xz}\,d\zeta .
	    \end{align}
    After applying the kinematic boundary conditions \eqref{bc_zeta_1}-\eqref{bc_zeta_2} and substituting $\Tilde{u}(t,x,\zeta)=u_m(t,x)+\alpha_1(t,x)\phi_1(\zeta)$, we get
    \begin{align}   
    	\begin{split}
    		\partial_t(h u_m) +  \partial_x \left( hu_m^2+ h\dfrac{\alpha_1^2}{3}+  \dfrac{gh^2}{2} \right)
    		=  - gh \partial_x h_b  - \dfrac{gh^2}{2 \rho}(\rho_s-\rho_w)\partial_x c_m +  F_b u_b - \epsilon |u_b|u_b,
    	\end{split}
    \end{align}
   which represents the depth-averaged momentum balance \eqref{depth_avg_momentum}.
    %%%%%%%%%%%%%%  First order averages %%%%%%%%%%%%
    
    \subsection{First-order projection}\label{Higher order averages}
    To get the first-order projection of the momentum equation, multiply \eqref{eq:resolved  system1_2} with the test function $\phi_1(\zeta)$ and integrate from $\zeta=0$ to $\zeta=1$, 
	    \begin{align}
	    &\partial_t\left(h\int_0^1\phi_1\Tilde u\,d\zeta\right)
	    +\partial_x\left(h\int_0^1\phi_1\Tilde u^2\,d\zeta\right)
	    +gh\partial_x(h+h_b)\int_0^1\phi_1\,d\zeta\notag\\
	    &\quad+\int_0^1\phi_1\partial_\zeta\!\left[\Tilde u\left(\Tilde w
	    -\Tilde u\partial_x(\zeta h+h_b)-\partial_t(\zeta h+h_b)\right)\right]d\zeta\notag\\
	    &=-\dfrac{gh^2}{\rho}\partial_x\rho\int_0^1(1-\zeta)\phi_1\,d\zeta
	    +\dfrac{1}{\rho}\int_0^1\phi_1\partial_\zeta\Tilde\sigma_{xz}\,d\zeta .
	    \end{align}
    We have, 
    \begin{align}
    	\int_{0}^{1} \phi_1 \Tilde{u} \, d\zeta &=\int_{0}^{1} \phi_1 \left(u_m+  \alpha_1\phi_1\right) \, d\zeta
    	= \dfrac{\alpha_1}{3}.
    \end{align}
    and
    \begin{align}
    	\int_{0}^{1} \phi_1 \Tilde{u}^2 \,  \, d\zeta &=\int_{0}^{1} \phi_1 \left(u_m^2 + 2 u_m  \alpha_1\phi_1 + \alpha_1^2\phi_1^2 \right) \, d\zeta= \dfrac{2}{3}u_m\alpha_1.
    \end{align}
	    The projection on the vertical-coupling term reads
	    \begin{align}
	    &\int_0^1\phi_1\partial_\zeta\!\left[\Tilde u\left(\Tilde w
	    -\Tilde u\partial_x(\zeta h+h_b)-\partial_t(\zeta h+h_b)\right)\right]d\zeta\notag\\
	    &=-\int_0^1\phi_1\partial_\zeta\!\left[\Tilde u\,
	    \partial_x\left(h\int_0^\zeta \Tilde u\,d\hat\zeta\right)\right]d\zeta
	    -F_b\int_0^1\phi_1\partial_\zeta(\Tilde u\zeta)\,d\zeta\notag\\
	    &\qquad-\partial_t h_b\int_0^1\phi_1\partial_\zeta\Tilde u\,d\zeta .
	    \end{align}
    
    We notice the presence of the integral associated with entrainment and deposition 
    \begin{align}
    	& F_b\int_{0}^{1} \phi_1 \partial_\zeta \left(\Tilde{u}\zeta\right) \, d\zeta
    	=  F_b\left(  \int_{0}^{1} \alpha_1 \phi_1^2 \, d\zeta +  \int_{0}^{1} \alpha_1 \zeta \phi_1 \phi_1^{\prime}\, d\zeta \right)
    	=  \dfrac{2F_b\alpha_1}{3}.
    \end{align}
    and the projection related to the coupling with the bed evolution 
    \begin{align}\label{A_s defination}
    	\partial_t h_b \int_{0}^{1} \phi_1 \partial_\zeta \Tilde{u} \, d\zeta  
    	=  \partial_t h_b \alpha_1 \left(\int_{0}^{1}   \phi_1 \phi_1^{\prime} \, d\zeta \right) 
    	=0. 
    \end{align}
    Therefore, the projection of the vertical coupling for first-order reads
    \begin{align}
    \int_{0}^{1} \phi_1 \partial_\zeta &\left[\Tilde{u}\left(\Tilde{w} -\Tilde{u}\partial_x\left(\zeta h+h_b\right)- \partial_t\left(\zeta h+h_b\right)\right)\right] \, d\zeta \notag
    = - \dfrac{u_m}{3}\partial_x (h \alpha_1) - \dfrac{2F_b\alpha_1}{3}.
    \end{align}
    Similarly, we integrate the term that accounts for density variation
    \begin{align}
    	& \dfrac{gh^2}{\rho} \dfrac{\partial \rho}{\partial x} \int_{0}^{1} (1-\zeta) \phi_1  \, d\zeta 
    	= \dfrac{gh^2}{\rho} (\rho_s-\rho_w)\dfrac{\partial c_m}{\partial x} \left(- \int_{0}^{1} \zeta \phi_1  \, d\zeta \right)
    	= \dfrac{gh^2}{6\rho} (\rho_s-\rho_w)\partial_x c_m.
    \end{align}
    and the friction term for the additional linear moment equation,
	    \begin{align}
	    \dfrac{1}{\rho}\int_0^1\phi_1\partial_\zeta\sigma_{xz}\,d\zeta
	    &=\dfrac{1}{\rho}\int_0^1\partial_\zeta(\phi_1\sigma_{xz})\,d\zeta
	      -\dfrac{1}{\rho}\int_0^1\sigma_{xz}\partial_\zeta\phi_1\,d\zeta\notag\\
	    &=-\epsilon |u_b|u_b
	      -\dfrac{\nu}{h}\int_0^1\alpha_1(\phi_1^\prime)^2\,d\zeta\notag\\
	    &=-\epsilon |u_b|u_b-\dfrac{4\nu}{h}\alpha_1 .
	    \end{align}
    After putting all together, we obtain the higher average momentum balance in $x$ direction \eqref{final_higher_average_equation}.

\end{document}